\definecolor{DarkPurple}{rgb}{0.40,0.0,0.20}
\theoremstyle{definition}
\newtheorem{defi}{Definition}[section]
\theoremstyle{plain}
\newtheorem{prop}[defi]{Proposition}
\newtheorem{lem}[defi]{Lemma}
\newtheorem{stel}[defi]{Theorem}
\newtheorem{gev}[defi]{Corollary}
\newtheorem{ques}[defi]{Question}
\newtheorem*{stel*}{Theorem}
\newtheorem*{gev*}{Corollary}
\theoremstyle{remark}
\newtheorem{opm}[defi]{Remark}
\newcommand{\La}{\mathcal{L}}
\newcommand{\nat}{\mathbb{N}}
\newcommand{\zz}{\mathbb{Z}}
\newcommand{\rr}{\mathbb{R}}
\newcommand{\qq}{\mathbb{Q}}
\DeclareMathOperator{\charac}{char}
\DeclareMathOperator{\Frac}{Frac}
\DeclareMathOperator{\Trd}{Trd}
\DeclareMathOperator{\Nrd}{Nrd}
\DeclareMathOperator{\Odd}{\mathsf{Odd}}
\DeclareMathOperator{\Neg}{\mathsf{Neg}}
\def\YEAR{2021}\newcount\VOL\VOL=\YEAR\advance\VOL by-1995
\def\firstpage{1851}\def\lastpage{1869}
\def\received{November 13, 2020}\def\revised{November 20, 2021}
\def\communicated{Michael Rathjen}
\def\magnification{\afterassignment\m@g\count@}
\def\m@g{\mag=\count@\hsize6.5truein\vsize8.9truein\dimen\footins8truein}
\font\eightrm=cmr8
\font\caps=cmcsc10                    
\font\Caps=cmcsc10 scaled \magstep1   
\font\scaps=cmcsc8
\def\DocMath{{\def\th{\thinspace}\scaps Documenta Math.}}
\renewcommand{\@oddfoot}{\hfill\scaps Documenta Mathematica 
    \number\VOL\  (\number\YEAR) \number\firstpage--\lastpage\hfill}
\renewcommand{\@evenfoot}{\ifnum\thepage>\lastpage\hfill\scaps
    Documenta Mathematica \number\VOL\  (\number\YEAR)\hfill\else\@oddfoot\fi}%
\renewcommand{\@evenhead}{%
    \ifnum\thepage>\lastpage\rlap{\thepage}\hfill%
    \else\rlap{\thepage}\slshape\leftmark\hfill{\caps\SAuthor}\hfill\fi}%
\renewcommand{\@oddhead}{%
    \ifnum\thepage=\firstpage{\DocMath\hfill\llap{\thepage}}%
    \else{\slshape\rightmark}\hfill{\caps\STitle}\hfill\llap{\thepage}\fi}%
\def\TSkip{\bigskip}
\newbox\TheTitle{\obeylines\gdef\GetTitle #1
\ShortTitle  #2
\SubTitle    #3
\Author      #4
\ShortAuthor #5
\EndTitle
{\setbox\TheTitle=\vbox{\baselineskip=20pt\let\par=\cr\obeylines%
\halign{\centerline{\Caps##}\cr\noalign{\medskip}\cr#1\cr}}%
	\copy\TheTitle\TSkip\TSkip%
\def\next{#2}\ifx\next\empty\gdef\STitle{#1}\else\gdef\STitle{#2}\fi%
\def\next{#3}\ifx\next\empty%
    \else\setbox\TheTitle=\vbox{\baselineskip=20pt\let\par=\cr\obeylines%
    \halign{\centerline{\caps##} #3\cr}}\copy\TheTitle\TSkip\TSkip\fi%
\centerline{\caps #4}\TSkip\TSkip%
\def\next{#5}\ifx\next\empty\gdef\SAuthor{#4}\else\gdef\SAuthor{#5}\fi%
\ifx\received\empty\relax
    \else\centerline{\eightrm Received: \received}\fi%
\ifx\revised\empty\TSkip%
    \else\centerline{\eightrm Revised: \revised}\TSkip\fi%
\ifx\communicated\empty\relax
    \else\centerline{\eightrm Communicated by \communicated}\fi\TSkip\TSkip%
\catcode'015=5}}\def\Title{\obeylines\GetTitle}
\def\Abstract{\begingroup\narrower
    \parskip=\medskipamount\parindent=0pt{\caps Abstract. }}
\def\EndAbstract{\par\endgroup\TSkip}
\long\def\MSC#1\EndMSC{\def\arg{#1}\ifx\arg\empty\relax\else
     {\par\narrower\noindent%
     2020 Mathematics Subject Classification: #1\par}\fi}
\long\def\KEY#1\EndKEY{\def\arg{#1}\ifx\arg\empty\relax\else
	{\par\narrower\noindent Keywords and Phrases: #1\par}\fi\TSkip}
\newbox\TheAdd\def\Addresses{\vfill\copy\TheAdd\vfill
    \ifodd\number\lastpage\vfill\eject\phantom{.}\vfill\eject\fi}
{\obeylines\gdef\GetAddress #1
\Address #2 
\Address #3
\Address #4
\EndAddress
{\def\xs{5.1truecm}\parindent=0pt
\setbox0=\vtop{{\obeylines\hsize=\xs#1\par}}\def\next{#2}
\ifx\next\empty 
     \setbox\TheAdd=\hbox to\hsize{\hfill\copy0\hfill}
\else\setbox1=\vtop{{\obeylines\hsize=\xs#2\par}}\def\next{#3}
\ifx\next\empty 
     \setbox\TheAdd=\hbox to\hsize{\hfill\copy0\hfill\copy1\hfill}
\else\setbox2=\vtop{{\obeylines\hsize=\xs#3\par}}\def\next{#4}
\ifx\next\empty\ 
     \setbox\TheAdd=\vtop{\hbox to\hsize{\hfill\copy0\hfill\copy1\hfill}
                \vskip20pt\hbox to\hsize{\hfill\copy2\hfill}}
\else\setbox3=\vtop{{\obeylines\hsize=\xs#4\par}}
     \setbox\TheAdd=\vtop{\hbox to\hsize{\hfill\copy0\hfill\copy1\hfill}
	        \vskip20pt\hbox to\hsize{\hfill\copy2\hfill\copy3\hfill}}
\fi\fi\fi\catcode'015=5}}\gdef\Address{\obeylines\GetAddress}
\def\LOCAL{\jobname.files}
\begin{document}
\Title
Universally Defining Finitely Generated Subrings 
of Global Fields
\ShortTitle 
Universally Defining Subrings of Global Fields
\SubTitle   
\Author 
Nicolas Daans
\ShortAuthor 
N. Daans
\EndTitle
\Abstract 
It is shown that any finitely generated subring of a global field has a universal first-order definition in its fraction field. This covers Koenigsmann's result for the ring of integers and its subsequent extensions to rings of integers in number fields and rings of $S$-integers in global function fields of odd characteristic. In this article a proof is presented which is uniform in all global fields, including the characteristic two case, where the result is entirely novel. Furthermore, the proposed method results in universal formulae requiring significantly fewer quantifiers than the formulae that can be derived through the previous approaches.
\EndAbstract
\MSC 
Primary 11U99; Secondary 11R52
\EndMSC
\KEY 
Diophantine set, definability, quaternion algebra, Hilbert 10
\EndKEY
\Address
Nicolas Daans
Universiteit Antwerpen
Departement Wiskunde
Campus Middelheim - G, M.G.105
Middelheimlaan 1
2020 Antwerpen
Belgium
nicolas.daans@uantwerpen.be
\Address
\Address
\Address
\EndAddress

\section{Introduction}
It was recently shown by Koenigsmann that there is a universal definition of $\zz$ in $\qq$ in the first-order language of rings \cite{Koe16}. That is, he showed that there exist a natural number $m$ and a polynomial $F \in \qq[X, Y_1, \ldots, Y_m]$ such that
\begin{displaymath}
\mathbb{Z} = \lbrace x \in \mathbb{Q} \mid \forall y_1, \ldots, y_m \in \mathbb{Q} : F(x, y_1, \ldots, y_m) \neq 0 \rbrace.
\end{displaymath}
This builds on earlier work by Poonen, who had derived an $\forall\exists$-formula defining $\zz$ in $\qq$ \cite{Poo09}. That $\zz$ has a first-order definition in $\qq$ at all was already known long before and first shown by Robinson \cite{Rob49}.

The purpose of this paper is to present a variation of Koenigsmann's construction. This adaptation not only shortens the proof and yields a simpler formula by removing the need for many case distinctions, it also generalises directly to other global fields. Before stating our results in full generality, let us illustrate how the method applies for universally defining $\zz$ in $\qq$.

Let $\mathbb{P}$ be the set of prime numbers. For $p \in \mathbb{P}$, we denote by $v_p$ the $p$-adic valuation on~$\qq$, by $\zz_{(p)}$ the corresponding valuation ring of $\qq$, and by $p\zz_{(p)}$ its maximal ideal.

One can easily obtain a universal definition of $\zz$ in $\qq$ once one has found an existential definition of $\bigcup_{p \in \mathbb{P}} p\zz_{(p)}$ in $\qq$. Indeed, for $x \in \qq^\times$ one has
\begin{displaymath}
x \in \zz \enspace \Leftrightarrow \enspace x^{-1} \not\in \bigcup_{p \in \mathbb{P}} p\zz_{(p)}.
\end{displaymath}
To work our way towards an existential definition of $\bigcup_{p \in \mathbb{P}} p\zz_{(p)}$ in $\qq$, quaternion algebras over $\qq$ turn out to be a useful tool. Quaternion algebras were first introduced in this context in \cite{Poo09}. For a field $K$ with $\charac(K) \neq 2$ and $a, b \in K^\times$, write $(a, b)_K$ for the $K$-quaternion algebra with generators $i, j$ such that $i^2 = a$, $j^2 = b$ and $ij + ji = 0$. To a given $a, b \in \qq^\times$, we associate the set
\begin{displaymath}
\Delta_{a, b} = \lbrace p \in \mathbb{P} \mid (a, b)_{\qq_p} \text{ not split} \rbrace.
\end{displaymath}
This is a finite set of prime numbers also called the \emph{ramification set} of the quaternion algebra $(a, b)_\qq$; it clearly depends only on the isomorphism class of $(a, b)_\qq$ as a $\qq$-algebra. In \cref{sectlocalglobal} its computation and properties will be discussed. Now we define the subset
\begin{displaymath}
\Delta_{a, b}^c = \lbrace p \in \Delta_{a, b} \mid v_p(c) \text{ is odd} \rbrace
\end{displaymath}
for $a, b, c \in \qq^\times$. In \cref{sectsemilocal} it will be shown that the subset of $\qq$
\begin{displaymath}
J^c_{a, b} = \bigcap_{p \in \Delta^c_{a, b}} p\zz_{(p)}
\end{displaymath}
has an existential definition in $\qq$, uniformly in the parameters $a, b, c \in \qq^\times$. Here, we take the convention that $\bigcap \emptyset = \qq$. These sets were implicitly already introduced and given an existential definition by Koenigsmann \cite{Koe16}, building on earlier work by Poonen \cite{Poo09}. 

To obtain a universal definition of $\zz$ in $\qq$, it then remains to show that we can build an existential definition of $\bigcup_{p \in \mathbb{P}} p\zz_{(p)}$ by using the existentially definable sets $J_{a, b}^c$. Here we deviate substantially from the approach in \cite{Koe16}. 
Consider the following subset of $(\qq^{\times})^2$:
\begin{displaymath}
\Phi = \lbrace (1+4a^2, 2b) \mid a, b \in \zz_{(2)}^\times \rbrace.
\end{displaymath}
This set is existentially definable in $\qq$: to see this one can, for example, use that $2\zz_{(2)} = J^2_{2, 5}$. We will see that
\begin{equation}\label{eqintro}
\bigcup_{p \in \mathbb{P}} p\zz_{(p)} = 2\zz_{(2)} \cup \bigcup_{(x, y) \in \Phi} (J^{x}_{x, y} \cap J^{2y}_{x, y}).
\end{equation}
As the set on the right is existentially definable, this gives us the required existential definition for $\bigcup_{p \in \mathbb{P}} p\zz_{(p)}$ in $\qq$.

\medskip

Let us explain why the inclusion from right to left holds. Take an arbitrary pair $(x, y) \in \Phi$. It is not hard to see that $2 \in \Delta_{x, y}$ (see \Cref{pitchfork2}). Furthermore, as $x > 0$, it is a well-known corollary of the Quadratic Reciprocity Law that $\Delta_{x, y}$ contains an even number of elements (\Cref{Hilbertreciprocity}); in particular we must have $\Delta_{x, y} \neq \lbrace 2 \rbrace$, so there exists a prime number $p \in \Delta_{x, y} \setminus \lbrace 2 \rbrace$. This implies $(x, y)_{\qq_p}$ is non-split, so that either $v_p(x)$ or $v_p(y) = v_p(2y)$ is odd (see \Cref{necessaryconditionsplitlocal}); we conclude that $p \in \Delta_{x, y}^x \cup \Delta_{x, y}^{2y}$ and hence $(J^{x}_{x, y} \cap J^{2y}_{x, y}) \subseteq p\zz_{(p)}$. As this holds for arbitrary $(x, y) \in \Phi$, this shows the inclusion from right to left in (\ref{eqintro}).

\medskip
For the other inclusion, the key point is to show that, given an odd prime $p$, one can find a pair $(x, y) \in \Phi$ such that $\Delta_{x, y} = \lbrace 2, p \rbrace$. One possible approach is to first find a prime number $q \equiv 5 \bmod 8$ such that $p$ is a non-square modulo $q$. For such $q$ one has $\Delta_{q, 2p} = \lbrace 2, p \rbrace$ and $q = c^2 + 4d^2$ for certain $c, d \in \zz \setminus 2\zz$; hence one can set $x = q/c^2$, $y = 2p$. Having found $x$ and $y$ such that $\Delta_{x, y} = \lbrace 2, p\rbrace$, we see that $\Delta_{x, y}^x \cup \Delta_{x, y}^{2y} = \lbrace p \rbrace$ and as such $J^{x}_{x, y} \cap J^{2y}_{x, y} = p\zz_{(p)}$. This concludes the proof of the inclusion from left to right.

\medskip

With some adjustments a similar construction can be used in a more general context. For a global field $K$ (i.e. a number field or a function field in one variable over a finite field) and a finite set $S$ of valuations on $K$, define the \textit{ring of $S$-integers} as the intersection of all valuation rings of $K$ excluding those which are given by valuations in $S$. Our main result can be stated as follows:
\begin{stel*}
Let $K$ be a global field, $S$ a finite set of valuations on $K$. The ring of $S$-integers has a universal first-order definition in $K$ with $37$ quantifiers in the language of rings with constants for $K$.
\end{stel*}
Without the quantitative bound, large parts of this result were already established by Park \cite{Par13} for number fields and Eisentr{\"a}ger and Morrison \cite{Eis18} for global fields of odd characteristic. 
We recover these results with a uniform proof for all cases. This is because we reduce the number theoretic ingredients to just two statements. Firstly, that a quaternion algebra over a global field $K$ which is split over all embeddings into~$\rr$ ramifies at an even number of primes (a fact known in the case $K = \qq$ as Hilbert's Reciprocity Law and closely related to the Quadratic Reciprocity Law). Secondly that conversely, for every finite set of primes of even cardinality of a global field $K$, there exists a quaternion algebra over $K$ which ramifies precisely at these primes. To be able to also cover global fields of characteristic $2$ in this article, for which the question has so far remained undiscussed in the literature, we will switch to a characteristic-independent parametrisation of quaternion algebras due to Albert, which we will state and briefly elaborate on in \cref{sectquat}.

\medskip
The main theorem is first proven in \cref{sectS} without quantitative bounds. This way, steps which serve only to lower the number of quantifiers do not distract the reader from the structure of the argument. In \cref{sectquantcount} we zoom in on how to obtain the bound on the number of quantifiers. The original method of \cite{Koe16}, even just in the case 
\newpage 
of defining $\zz$ in $\qq$, yields definitions with several hundreds of quantifiers.\footnote{The published version of \cite{Koe16} does not explicitly count the number of quantifiers. In an earlier preprint available online (\url{https://archive.org/details/arxiv-1011.3424}) it is claimed that a universal formula with $418$ quantifiers is obtained. However, I was unable to replicate this count and reach the same final number.}
The technique from the current article has been further improved in the case of defining~$\zz$ in $\qq$ in a recent preprint by Sun and Zhang \cite{ZhangSunZinQ}, leading to a definition with $32$ quantifiers.

Rings of $S$-integers in a global field $K$ are finitely generated subrings of $K$ which have~$K$ as their fraction field. In \cref{sectfingen} it will be explained how universal definability of arbitrary finitely generated subrings of $K$ with fraction field $K$ follows easily from universal definability of rings of $S$-integers. Here, however, we lose the bound on the number of quantifiers. Nevertheless, the most general result on universal definability of subrings of global fields we obtain can then be stated compactly as follows:
\begin{gev*}
Let $K$ be a global field. Any finitely generated subring $R$ of $K$ such that $K$ is the fraction field of $R$ has a universal first-order definition in $K$.
\end{gev*}

\subsection*{Acknowledgements}  I would like to thank Philip Dittmann for suggesting the approach in Section 8, as well as Jan Van Geel and Bjorn Poonen for pointing out \Cref{Dirichlet}. 

This work grew out of my master thesis. It will also be part of my PhD thesis, prepared at the University of Antwerp under the supervision of Karim Johannes Becher and Philip Dittmann.

This work was supported by Fonds Wetenschappelijk Onderzoek - Vlaanderen (FWO) through PhD Fellowship fundamental research 51581, and by the FWO Odysseus programme (project \emph{Explicit Methods in Quadratic Form Theory}).

\section{Logical preliminaries and notation}
Let $\nat$ denote the set of natural numbers, including $0$. We will further use the notation $2\nat = \lbrace 2n \mid n \in \nat \rbrace$.

We will use some basic standard terminology from mathematical logic when dealing with the syntax and semantics of statements in first-order language, as covered by many textbooks (e.g. \cite[Chapter II-III]{Ebb94}). We denote by $\La$ the first-order language of rings, that is the language consisting of three binary operation symbols $+, -, \cdot$ and two constant symbols $0, 1$. We denote by $\doteq$ the equality symbol in the language. Let $R$ be a commutative ring and let $\La_R$ denote the language of rings extended with constant symbols for the elements of $R$. We interpret a commutative $R$-algebra $K$ as an $\La_R$-structure via the action of $R$ on $K$. The identity map makes any commutative ring $R$ into an $\La_R$-structure in a canonical way.

Let $\varphi(X_1, \ldots, X_n)$ be an $\La_R$-formula in free variables $X_1, \ldots, X_n$. Given a commutative $R$-algebra $K$ and a tuple $(x_1, \ldots, x_n) \in K^n$, we write $K \models \varphi(x_1, \ldots, x_n)$ if and only if, after substituting $x_i$ for $X_i$ for each $i \in \lbrace 1, \ldots, n \rbrace$, the $\La_K$-statement $\varphi(x_1, \ldots, x_n)$ holds when evaluated in $K$. We call two $\La_R$-formulas $\varphi_1$ and $\varphi_2$ in free variables $X_1, \ldots, X_n$ \textit{equivalent} if for any $\La_R$-structure $K$ and for any $(x_1, \ldots, x_n) \in K^n$ we have $K \models \varphi_1(x_1, \ldots, x_n)$ if and only if $K \models \varphi_2(x_1, \ldots, x_n)$.

As we will only evaluate first-order formulas in commutative rings, there is no ambiguity in interpreting atomic $\La_R$-formulas as polynomial equalities with coefficients in~$R$. Furthermore, we may identify $\La$ with $\La_\zz$, as every ring is a $\zz$-algebra in a unique way. Up to equivalence, every existential $\La_R$-formula $\varphi(X_1, \ldots, X_n)$ in free variables $X_1, \ldots, X_n$ can be written as
\begin{equation}\label{posex}
\exists Y_1, \ldots, Y_m \bigvee_{i=1}^p\left(\left(\bigwedge_{j=1}^{q_i} f_{i, j} \doteq 0\right)\wedge\left(\bigwedge_{k=1}^{r_i}\neg(g_{i, k} \doteq 0) \right)\right)
\end{equation}
for some $m, p, q_i, r_i \in \nat$ and $f_{i, j}, g_{i, k} \in R[X_1, \ldots, X_n, Y_1, \ldots, Y_m]$ for all $i, j, k$. For the sake of brevity, we will call an existential $\La_R$-formula with $m$ quantifiers an $\exists_m\La_R$-\textit{formula}. By an $\exists \La_R$-\textit{formula} we mean an $\exists_m\La_R$-formula for some $m \in \nat$. If $p = q_1 = 1$ and $r_1 = 0$ (i.e. $\varphi$ consists just of one polynomial equation), we call $\varphi$ a \textit{diophantine $\La_R$-formula}. 

Given a subset $B \subseteq K^n$ and an $\La_R$-formula $\varphi(X_1, \ldots, X_n)$ such that
\begin{displaymath}
B = \lbrace (x_1, \ldots, x_n) \in K^n \mid K \models \varphi(x_1, \ldots, x_n) \rbrace,
\end{displaymath}
we say that $\varphi$ \textit{defines} $B$. A subset of $K^n$ is said to have an \textit{existential} (respectively \textit{diophantine}) \textit{$\La_R$-definition with $m$ quantifiers} if it is equal to the set defined by some existential (respectively diophantine) $\La_R$-formula with $m$ quantifiers. Instead of an existential $\La_R$-definition with $m$ quantifiers, we write $\exists_m\La_R$-\textit{definition} for short. Analogously, we define \textit{universal} $\La_R$-\textit{formulas} and -\textit{definitions} with $m$ quantifiers by replacing the existential quantifiers by universal quantifiers in (\ref{posex}) and we use the notation $\forall_m \La_R$. Note that a subset of $K^n$ has a $\forall_m\La_R$-definition if and only if its complement in $K^n$ has an $\exists_m\La_R$-definition.

It is clear that when $\phi$ and $\psi$ are $\exists_{m_1}\La_R$- and $\exists_{m_2}\La_R$-formulas respectively, then $\phi \vee \psi$ is equivalent to an $\exists_{m}\La_R$-formula for $m = \max\{m_1, m_2\}$ and $\phi \wedge \psi$ is equivalent to an $\exists_{m_1+m_2}\La_R$-formula. This implies in particular that finite intersections and unions of $\exists\La_R$-definable subsets of $K^n$ again have an $\exists\La_R$-definition.

In the rest of this article we will continue to work with general existential formulae instead of diophantine formulae, as the former are more natural to reason with. Furthermore, a well-known statement asserts that for a non-algebraically closed field $K$, every $\exists \La_K$-definable set is also definable by a diophantine $\La_K$-formula. Here is a version of this statement with quantitative bounds and a sketch of a proof for completeness.
\begin{prop}
Let $K$ be a field that is not algebraically closed, $m, n \in \nat$, $B \subseteq K^n$ an $\exists_m \La_K$-definable set. Then $B$ has a diophantine $\La_K$-definition in $K^n$ with $m+1$ quantifiers. 
\end{prop}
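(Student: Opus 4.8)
The plan is to reduce an arbitrary existential formula of rank $m$ to a single polynomial equation at the cost of one extra quantifier. The first step is to observe that the Boolean structure in \cref{posex} can be handled purely algebraically. For a disjunction, if $B_1$ is defined by $f_1 \doteq 0$ and $B_2$ by $f_2 \doteq 0$ (with the same free and existential variables, which we may arrange by padding), then $B_1 \cup B_2$ is defined by $f_1 f_2 \doteq 0$; this introduces no new quantifiers. For a conjunction of equations $f_1 \doteq 0 \wedge f_2 \doteq 0$, one would like to use $f_1^2 + f_2^2 \doteq 0$, but this only works over fields where $-1$ is not a sum of squares. The standard trick that works over \emph{any} non-algebraically closed field is the following: since $K$ is not algebraically closed, there is a form that represents $0$ only trivially. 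Concretely, pick a polynomial $h(T) = T^d + a_{d-1}T^{d-1} + \cdots + a_0 \in K[T]$ of degree $d \geq 2$ with no root in $K$; then the homogenization $H(U, V) = U^d h(V/U) = U^d + a_{d-1}U^{d-1}V + \cdots + a_0 V^d$ has the property that $H(u, v) = 0$ for $u, v \in K$ forces $u = v = 0$. Iterating, the "norm form" $N(Z_1, \ldots, Z_d)$ obtained by composing $H$ with itself sufficiently often (or, more efficiently, the norm form of a degree-$d$ field extension) is an anisotropic form in which $N(z_1, \ldots, z_d) = 0$ implies all $z_i = 0$. Then a finite system $f_1 \doteq 0, \ldots, f_s \doteq 0$ is equivalent to the single equation $N(f_1, \ldots, f_s, 0, \ldots, 0) \doteq 0$ once $d \geq s$ (one can always enlarge $d$, or chain several copies of $N$).

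The second step is to eliminate the negations $\neg(g \doteq 0)$. Here one introduces a \emph{single} auxiliary variable $W$ and rewrites the entire negated part as follows: the statement $\bigwedge_{k=1}^{r} \neg(g_k \doteq 0)$, after the equalities have been absorbed, can be combined into $\neg(g \doteq 0)$ for $g = \prod_k g_k$ (a product is nonzero iff each factor is), and then $g \neq 0$ is equivalent to $\exists W : gW - 1 \doteq 0$. Combining this with an equality $f \doteq 0$ via the anisotropic norm form as above gives a single equation, and only one quantifier $W$ has been added in total, regardless of how many inequations appeared. Since the disjunction over $i = 1, \ldots, p$ is handled by multiplication without new quantifiers, and the new variable $W$ can be shared across all disjuncts (taking $g$ to be the relevant product in each disjunct), the whole formula \cref{posex} collapses to $\exists Y_1, \ldots, Y_m, W : F \doteq 0$ for a suitable $F \in K[X_1, \ldots, X_n, Y_1, \ldots, Y_m, W]$, which is a diophantine formula of rank $m + 1$.

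The step I expect to require the most care is the bookkeeping that keeps the extra quantifier count at exactly \emph{one} rather than growing with $p$ or with the $r_i$: one must make sure the single variable $W$ simultaneously witnesses the non-vanishing in whichever disjunct is satisfied, which works because in the disjunct that holds the corresponding $g$ is nonzero and $W = g^{-1}$ serves, while in the other disjuncts the equation is forced to fail anyway through the $f$-part — so it is cleanest to first put the formula in the shape $\exists \bar Y\, \bigvee_i (F_i \doteq 0 \wedge g_i \neq 0)$ with $F_i$ a single polynomial (via the norm-form trick on the equalities of disjunct $i$), then note $\bigvee_i(F_i \doteq 0 \wedge g_i \neq 0)$ is equivalent to $\exists W : \bigvee_i (F_i \doteq 0 \wedge g_i W - 1 \doteq 0)$, and finally fold the remaining disjunction and the two equations per disjunct back into one polynomial using the norm form once more. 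The anisotropy of the norm form is exactly what guarantees that $\sum$-type combinations vanish only when every argument does, so no spurious solutions are introduced; the only genuine input about $K$ is its non-algebraic-closedness, used solely to produce that form.
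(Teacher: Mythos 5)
Your proof is correct and uses the same three ingredients as the paper: a single auxiliary variable witnessing the invertibility of the product of the $g_{i,k}$, the homogenization of a rootless polynomial to merge conjunctions of equations, and multiplication to merge disjunctions. The paper simply performs these steps in the opposite order --- first replacing all inequations by one equation with the new quantifier, then collapsing conjunctions via $H$, then collapsing the disjunction --- which sidesteps the bookkeeping you do to justify sharing the single variable $W$ across disjuncts.
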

\begin{proof}
%
%
%
We start from the general formula in (\ref{posex}) and subsequently replace it by formulas defining the same set $B$, eventually ending up with a diophantine formula with $m+1$ quantifiers. Firstly, the observation that for $x_1, \ldots, x_r \in K$ we have $x_1, \ldots, x_r \neq 0$ if and only if $\exists y \in K : x_1\cdots x_ry = 1$ allows us to pass to an equivalent $\exists_{m+1}\La$-formula without inequations, i.e. with $r_i = 0$ for all $i$. Secondly, letting $H(X, Y)$ be the form obtained by homogenising a non-constant univariate polynomial without roots over $K$ -- which exists precisely because $K$ is not algebraically closed -- one can convert a system of two equations into one equation by using that for all $x, y \in K$ one has $x = 0 = y$ if and only if $H(x, y) = 0$. Repeated application of this trick allows us to assume that $q_i = 1$ in (\ref{posex}) for all $i$. Finally, a finite disjunction of equations can be converted into one equation by using that for $x, y \in K$ one has $x = 0$ or $y = 0$ if and only if $xy = 0$, which lets us reduce to $p = 1$.
\end{proof}
\begin{opm}
In the above statement, one can even conclude that $B$ has a diophantine $\La_K$-definition with $m$ quantifiers as soon as $m \geq 1$, see \cite[Corollary 4.12]{DDF}.
\end{opm}
\section{Quaternion algebras}\label{sectquat}
A \textit{quaternion algebra} over $K$ is by definition a $4$-dimensional central simple $K$-algebra. It follows from Wedderburn's Theorem \cite[Corollary 3.5.a]{Pie82} that such an algebra is either a division algebra, in which case we call it \textit{non-split}, or isomorphic to the ring of $2 \times 2$ matrices over $K$, in which case we call it \textit{split}.  When $Q$ is a quaternion algebra over $K$ and $L/K$ is a field extension, then $Q \otimes_K L$ is a quaternion algebra over $L$ \cite[8.5.1]{Sch85}. We denote this algebra by $Q_L$ and say that $Q$ \textit{splits over}~$L$, or that $L$ \textit{splits} $Q$, if $Q_L$ is split.

Given $a, b \in K$ with $b(1+4a) \neq 0$, we define the $4$-dimensional $K$-algebra $[a, b)_K = K \oplus Ku \oplus Kv \oplus Kuv$ with $u^2 - u = a$, $v^2 = b$ and $uv + vu = v$. This is a $K$-quaternion algebra and one can show that all quaternion algebras over $K$ are of this form for some~$a$ and $b$. (See \cite[Section IX.10]{Alb39}.)

Given $a, b \in K^\times$, we define the $4$-dimensional $K$-algebra $(a, b)_K = K \oplus Ki \oplus Kj \oplus Kij$ with $i^2 = a, j^2 = b$ and $ij + ji = 0$. If $\charac(K) \neq 2$ this is a $K$-quaternion algebra and one can show that all quaternion algebras over $K$ are of this form for some~$a$ and $b$. Furthermore, one has $[a, b)_K \cong (1+4a, b)_K$ by mapping $v$ to $j$ and $u$ to $\frac{i+1}{2}$. (See \cite[Section IX.10]{Alb39})

We denote by $\Trd$ and $\Nrd$ the \textit{reduced trace map} and the \textit{reduced norm map} on~$Q$ respectively. See \cite[Section 8.5]{Sch85} for basic properties of these two functions. If $Q = [a, b)_K$ and $x = x_1 + x_2u + x_3v + x_4uv$ for some $x_1, \ldots, x_4 \in K$, then one has
\begin{displaymath}
\Trd(x) = 2x_1 + x_2 \quad \text{and} \quad \Nrd(x) = x_1^2 + x_1x_2 - ax_2^2 - b(x_3^2 + x_3x_4 - ax_4^2).
\end{displaymath}
If $Q = (a, b)_K$, $\charac(K) \neq 2$ and $x = x_1 + x_2i + x_3j + x_4ij$ for $x_1, \ldots, x_4 \in K$, then
\begin{displaymath}
\Trd(x) = 2x_1 \quad \text{and} \quad \Nrd(x) = x_1^2 -ax_2^2 - bx_3^2 + abx_4^2.
\end{displaymath}

Here are some results on quaternion algebras that will be used later.
\begin{prop}\label{csa0}
Let $K$ be a field, $a, b \in K$.
\begin{enumerate}
\item Assume $b(1+4a) \neq 0$. The quaternion algebra $[a, b)_K$ is split if and only if $b = x^2 +xy - ay^2$ for some $x, y \in K$.
\item Assume $ab \neq 0$ and $\charac(K) \neq 2$. The quaternion algebra $(a, b)_K$ is split if and only if $b = x^2 - ay^2$ for some $x, y \in K$.
\end{enumerate}
\end{prop}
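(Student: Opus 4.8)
The plan is to reduce the splitting condition to the existence of a nontrivial zero of the norm form. Recall that a quaternion algebra $Q$ over $K$ is split if and only if it is not a division ring, and since $Q$ carries the reduced norm $\Nrd\colon Q \to K$ which is multiplicative, $Q$ is a division ring if and only if $\Nrd(x) \neq 0$ for all $x \in Q \setminus \{0\}$; equivalently, $Q$ is split if and only if the quaternary quadratic form $\Nrd$ on $Q$ is isotropic. So the task in both items is to massage the isotropy of the explicit norm form given in the preamble into the asserted representation condition.

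For item (1), write $x = x_1 + x_2 u + x_3 v + x_4 uv$, so $\Nrd(x) = (x_1^2 + x_1 x_2 - a x_2^2) - b\,(x_3^2 + x_3 x_4 - a x_4^2)$, i.e. $\Nrd = N_0 \perp \langle -b\rangle N_0$ where $N_0(s,t) = s^2 + st - at^2$ is the norm form of the quadratic étale algebra $K[u]$ with $u^2 - u = a$. First I would note that $N_0$ is anisotropic precisely when $K[u]$ is a field (if $N_0(s,t) = 0$ with $(s,t) \neq 0$ then $t \neq 0$ since $1+4a \neq 0$ forces $N_0(s,0) = s^2 \neq 0$ for $s \neq 0$, and then $s/t$ is a zero of $X^2 + X - a$, so $K[u]$ splits); conversely if $K[u]$ is split then $N_0$ visibly represents $0$. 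Now argue both directions. If $b = N_0(x,y)$ for some $x,y \in K$, then either $N_0$ is isotropic — whence $K[u]$ is split and $Q$, containing the split étale subalgebra on which the norm form already vanishes, is split — or $N_0$ is anisotropic, in which case $b \in N_0(K^2)$ means $b$ is a norm from the quadratic field extension $K[u]/K$, and then the standard cyclic-algebra criterion (or directly: $\Nrd$ restricted to $1 \cdot N_0 \perp \langle -b\rangle N_0$ represents $0$ by taking the vector realizing $b$ on the first copy against $1$ on the second) shows $\Nrd$ is isotropic, so $Q$ splits. Conversely, if $Q$ splits then $\Nrd$ is isotropic: some $(x_1,x_2,x_3,x_4) \neq 0$ has $N_0(x_1,x_2) = b\,N_0(x_3,x_4)$. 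If $(x_3,x_4) = 0$ then $N_0(x_1,x_2) = 0$ with $(x_1,x_2) \neq 0$, so $N_0$ is isotropic, hence universal, hence represents $b$. If $(x_3,x_4) \neq 0$ but $N_0(x_3,x_4) = 0$ then again $N_0$ is isotropic and we are done; otherwise $b = N_0(x_1,x_2)/N_0(x_3,x_4)$, and multiplicativity of the norm form $N_0$ of the étale algebra $K[u]$ (it is a norm form, hence multiplicative, even when $K[u]$ is a field) turns the quotient into a single value $N_0(x,y)$. This gives item (1).

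Item (2) is the classical characteristic-not-$2$ statement and follows the same template with $N_0(s,t) = s^2 - a t^2$ the norm form of $K[i]$, $i^2 = a$: $\Nrd = \langle 1, -a, -b, ab\rangle = N_0 \perp \langle -b\rangle N_0$, and $(a,b)_K$ splits iff this form is isotropic iff $b$ is represented by $\langle 1, -a\rangle$, i.e. $b = x^2 - a y^2$, by exactly the multiplicativity argument above (with the degenerate cases $a \in K^{\times 2}$, where $N_0$ is isotropic and universal, handled as before). Alternatively, one can deduce item (2) directly from item (1) via the isomorphism $[a',b)_K \cong (1+4a',b)_K$ recorded in the preamble together with a change of variables, but I would prefer the uniform self-contained argument.

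The main obstacle is the bookkeeping around the degenerate case where the binary form $N_0$ is already isotropic (equivalently $K[u]$ or $K[i]$ is not a field): there, "$b$ is a norm" is the trivial condition "$b$ is anything", so one must separately observe that an isotropic quaternion norm form is automatically split, and conversely that an isotropic binary form represents every element — none of this is deep, but it must be said to make the stated if-and-only-if literally correct. The only genuinely structural input is the multiplicativity of the norm form of a quadratic étale algebra (the Brahmagupta–Fibonacci-type identity), which is elementary and uniform in the characteristic.
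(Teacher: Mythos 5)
The paper does not actually prove this proposition; it merely cites \cite[Theorems IX.10.26 and IX.10.27]{Alb39}. Your proposal supplies a genuine self-contained argument, so there is no ``paper's route'' to compare against in the literal sense, but your route is the standard modern one and it is correct: identify splitness with isotropy of the reduced norm form (via the reduced characteristic polynomial $x^2 - \Trd(x)x + \Nrd(x) = 0$, which shows that $\Nrd(x) \neq 0$ forces $x$ invertible), decompose $\Nrd = N_0 \perp \langle -b \rangle N_0$ with $N_0$ the norm form of the quadratic \'etale algebra $K[u]$, and use multiplicativity of $N_0$ together with universality of a nonsingular isotropic binary form to convert the ratio $N_0(x_1,x_2)/N_0(x_3,x_4)$ into a single represented value. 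The hypothesis $1+4a \neq 0$ is exactly what makes $N_0$ nonsingular in every characteristic (in characteristic $2$ the $st$ term does the job), so the argument is genuinely characteristic-free, which is the point of using the $[a,b)$ presentation in this paper. The handling of the degenerate case (where $K[u]$ is not a field) is correctly isolated. Two very small points of sloppiness, neither of which breaks the argument: a nontrivial zero of $N_0$ gives $s/t$ as a root of $X^2 + X - a$ rather than of $X^2 - X - a$, but the map $\alpha \mapsto -\alpha$ shows one has a root in $K$ if and only if the other does; and the phrase ``containing the split \'etale subalgebra on which the norm form already vanishes'' is better said as: a nontrivial zero of $N_0$ lies in $K[u] \subset Q$ and is a zero divisor, so $Q$ is not a division algebra. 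Your remark that item (2) can be deduced from item (1) by the change of variables $X = x + y/2$, $Y = y/2$ applied to the isomorphism $[(a-1)/4, b)_K \cong (a,b)_K$ is also correct and worth keeping, since it matches the spirit of the paper's $[a,b)$-first presentation.
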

\begin{proof}
See \cite[Theorem IX.10.26 and Theorem IX.10.27]{Alb39}.
\end{proof}
\begin{prop}\label{csa2}
Let $K$ be a field and $Q$ a quaternion algebra over $K$. Suppose $d \in K$ is such that the splitting field of $X^2 - X - d$ splits $Q$. Then there exists $b \in K$ such that $Q \cong [d, b)_K$.
\end{prop}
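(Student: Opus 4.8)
The plan is to start from the hypothesis that $L = K[X]/(X^2-X-d)$ splits $Q$ and to produce, inside $Q$, an element playing the role of the generator $u$ with $u^2 - u = d$; once such an element is found, the standard structure theory of quaternion algebras will hand us a complementary generator $v$ with $v^2 = b$ and $uv+vu = v$, which is exactly what it means to have $Q \cong [d,b)_K$. So the problem splits into two cases according to whether $X^2 - X - d$ is irreducible over $K$ or not.

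If $X^2-X-d$ is reducible over $K$, then $d = t^2 - t$ for some $t \in K$, and $L = K \times K$; any quaternion algebra split by $K \times K$ is already split over $K$, i.e. $Q \cong M_2(K)$. In that case one checks directly that $M_2(K) \cong [d,b)_K$ for a suitable $b$: by \Cref{csa0}(1), $[d,b)_K$ is split iff $b = x^2 + xy - dy^2$ for some $x,y \in K$, and since $d = t^2-t$ the quadratic form $x^2+xy-dy^2$ represents, e.g., $1$ (take $y=0$, $x=1$), so $[d,1)_K$ is split, giving $Q \cong M_2(K) \cong [d,1)_K$. (One must of course keep the nondegeneracy condition $b(1+4d)\neq 0$ in mind; since $1+4d = (2t-1)^2 \ge$ a square, it vanishes only if $2t=1$, and in characteristic $2$ one should instead pick the value of $b$ so that the condition holds — this bookkeeping is routine.)

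If $X^2 - X - d$ is irreducible, then $L = K(w)$ with $w^2 - w = d$ is a separable quadratic field extension of $K$ that splits $Q$. The key step is then the classical fact — valid in all characteristics — that a quaternion division algebra (or $M_2(K)$) is split by a separable quadratic extension $L$ if and only if $L$ embeds as a $K$-subalgebra of $Q$. This is where the real content lies: one direction is the double-centralizer / dimension count (a maximal subfield of a quaternion algebra has degree $2$ and splits it), and the converse uses that $Q \otimes_K L$ has zero divisors, hence is split, together with Skolem–Noether or an explicit Wedderburn argument to realise $L$ inside $Q$. Granting this, fix a $K$-embedding $L \hookrightarrow Q$ and let $u \in Q$ be the image of $w$; then $u^2 - u = d$ and $K(u) \cong L$ is a maximal subfield.

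Finally, with $u$ in hand, I would invoke the standard presentation of a quaternion algebra relative to a given quadratic subfield: the centralizer of $K(u)$ is $K(u)$ itself, the nontrivial $K(u)/K$-automorphism $\sigma$ is realised by conjugation by some invertible $v \in Q$ (Skolem–Noether), so $v u v^{-1} = \sigma(u) = 1 - u$, which rearranges to $uv + vu = v$; and $v^2$ centralizes $K(u)$ and is fixed by $\sigma$-conjugation, hence $v^2 = b \in K^\times$. Then $1, u, v, uv$ are $K$-linearly independent (standard), so they form a basis and the assignment $u \mapsto u$, $v\mapsto v$ exhibits an isomorphism $[d,b)_K \xrightarrow{\sim} Q$, where $b(1+4d)\neq 0$ holds automatically because $v$ is a unit and $K(u)/K$ is separable. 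The main obstacle is the embedding criterion "$L$ splits $Q$ $\iff$ $L \hookrightarrow Q$" in the characteristic $2$ case; everything else is bookkeeping with the Albert presentation, and the references \cite{Alb39}, \cite{Sch85} should cover the splitting/embedding equivalence uniformly in the characteristic.
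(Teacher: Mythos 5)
Your proposal is correct and takes essentially the same route as the paper: the paper dispatches the split case by exhibiting explicit matrices giving $\mathbb{M}_2(K)\cong[d,1)_K$, and then handles the non-split case by citing the theory of cyclic algebras or, equivalently, a Skolem--Noether argument of exactly the kind you flesh out. Your case split is by reducibility of $X^2-X-d$ rather than by whether $Q$ is split, and you certify $[d,1)_K$ is split via \Cref{csa0} rather than by matrices, but these differences are cosmetic; the substantive step (embed $L$ in $Q$, apply Skolem--Noether to get $v$ with $vuv^{-1}=1-u$ and $v^2\in K^\times$) is the same. One small correction: your parenthetical about $b(1+4d)\neq 0$ is garbled --- in characteristic $2$ one has $1+4d=1\neq 0$ automatically, so there is nothing to "instead pick"; the real issue is that $[d,b)_K$ is only defined when $1+4d\neq 0$, which must be (and in the paper's later applications is) a tacit hypothesis on $d$.
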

\begin{proof}
One can easily see that $\mathbb{M}_2(K) \cong [d, 1)_K$ by considering the matrices
\begin{displaymath}
u = \begin{bmatrix}
0 & d \\ 1 & 1
\end{bmatrix} \quad \text{and} \quad v = \begin{bmatrix}
1 & 1 \\ 0 & -1
\end{bmatrix}
\end{displaymath}
and verifying that they satisfy $u^2 - u = d$, $v^2 = 1$ and $uv + vu = v$. Thus when~$Q$ is split, we can set $b = 1$. If $Q$ is non-split, the result can be derived from a straightforward application of the Skolem-Noether theorem, see e.g. \cite[Proposition 15.1.a]{Pie82}.
\end{proof}
\begin{lem}\label{quatrr}
For any $a, b \in \rr$ with $b \neq 0$, $[a^2, b)_\rr$ is split.
\end{lem}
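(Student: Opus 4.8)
The plan is to use part (1) of Proposition~\ref{csa0}: the algebra $[a^2, b)_{\rr}$ is split precisely when the equation $b = x^2 + xy - a^2 y^2$ has a solution $(x, y) \in \rr^2$. So the task reduces to showing that the binary quadratic form $q(x, y) = x^2 + xy - a^2 y^2$ represents every nonzero real number $b$. (Note that the hypothesis $b(1 + 4a^2) \neq 0$ needed to even form $[a^2, b)_{\rr}$ is automatic here, since $b \neq 0$ and $1 + 4a^2 > 0$.)

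First I would compute the discriminant of $q$. Completing the square gives $q(x,y) = \left(x + \tfrac{y}{2}\right)^2 - \left(a^2 + \tfrac14\right) y^2$, so after the linear change of variables $s = x + y/2$, $t = y$ — which is invertible over $\rr$ — the form $q$ is equivalent to $s^2 - (a^2 + \tfrac14) t^2$. Since $a^2 + \tfrac14 > 0$, this is a form of signature $(1,1)$, i.e.\ an isotropic (indefinite) binary form over $\rr$. An isotropic binary quadratic form over any field represents every element of the field; over $\rr$ one sees this concretely: $s^2 - c t^2$ with $c > 0$ takes the value $c$ at $(0,1)$, the value $1$ at $(1,0)$, and takes arbitrarily large positive and negative values, and being continuous and defined on a connected set $\rr^2 \setminus \{0\}$ it takes all real values. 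Alternatively, pick $t$ with $-ct^2 = b - 1$ if $b \le 1$, or scale the isotropic vector appropriately if $b > 0$; in all cases a solution exists. Hence $b$ is represented by $q$, and so $[a^2, b)_{\rr}$ is split.

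I do not expect any real obstacle here: the only mild subtlety is bookkeeping the characteristic-free normal form $x^2 + xy - a^2 y^2$ rather than the more familiar $x^2 - a^2 y^2$, but since we are working over $\rr$ (where $2$ is invertible) the completion-of-the-square step handles this immediately, and the key input is simply that an indefinite binary real form is universal. One could also phrase the whole argument via Proposition~\ref{csa0}(2) after invoking the isomorphism $[a^2, b)_{\rr} \cong (1 + 4a^2, b)_{\rr}$, reducing to whether $b = x^2 - (1+4a^2) y^2$ is solvable, which again holds because $1 + 4a^2 > 0$ makes the form indefinite.
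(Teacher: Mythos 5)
Your proof is correct and amounts to the same approach as the paper, just unpacked in more detail. The paper's proof is a one-liner: $[a^2,b)_\rr \cong (1+4a^2,b)_\rr$, and $1+4a^2$ is a (nonzero) square in $\rr$, so by Proposition~\ref{csa0}(2) the algebra is split — the observation being that when the first slot is a square the norm form $x^2 - a y^2$ is isotropic, hence universal. Your primary route works directly with Proposition~\ref{csa0}(1) and completes the square to exhibit the same indefiniteness, and you record the paper's route verbatim as your alternative at the end; both hinge on precisely the same fact, $1 + 4a^2 > 0$.
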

\begin{proof}
We have $[a^2, b)_\rr \cong (1+4a^2, b)_\rr$ and $1+4a^2$ is a square in $\rr$. Now invoke \Cref{csa0}.
\end{proof}
\section{Local and global fields}\label{sectlocalglobal}
By a \textit{local field} we will mean the fraction field of a complete discrete valuation ring with finite residue field. We call the corresponding $\mathbb{Z}$-valuation on this field the \textit{canonical valuation} of the local field. Note that a finite extension of a local field is again local. The reader is referred to \cite{Eng05} for an overview on valuation theory.

\begin{prop}\label{necessaryconditionsplitlocal}
Let $K$ be a local field. Let $v$ be its canonical valuation and let $F$ be the residue field.
\begin{enumerate}[(a)]
\item If $\charac(F) \neq 2$ and $a, b \in K^\times$ are such that $(a, b)_K$ is a division algebra, then at least one of $v(a)$ and $v(b)$ is odd.
\item If $a, b \in K$ are such that $b(1+4a) \neq 0$ and $[a, b)_K$ is a division algebra, then $v(a) \leq 0$. If additionally $v(a) = 0$, then at least one of $v(1+4a)$ and $v(b)$ is odd.
\end{enumerate}
\end{prop}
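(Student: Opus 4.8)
The plan is to argue by contraposition in each part: assuming the relevant valuations are all even, I will show that the quaternion algebra is split, by combining the splitting criteria of \Cref{csa0} with Hensel's lemma. Write $\mathcal{O}$ for the valuation ring of $v$, $\mathfrak{m}$ for its maximal ideal, and $\bar{x}$ for the image in $F$ of $x\in\mathcal{O}$. The one external fact I will use is that a nonsingular binary quadratic form over a finite field represents every element of the field; this follows from the classification of such forms (an isotropic one is hyperbolic, while an anisotropic one is the norm form of the quadratic extension, whose norm map is onto) or from the Chevalley-Warning theorem.

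For part (a), suppose $v(a)$ and $v(b)$ are both even. Since $(a,b)_K\cong(ac^2,bd^2)_K$ for any $c,d\in K^\times$ (rescale $i$ and $j$), I may assume $a,b\in\mathcal{O}^\times$. By \Cref{csa0}(2) it suffices to solve $b=x^2-ay^2$ in $K$. Its reduction $x^2-\bar{a}y^2$ is a nonsingular binary form over $F$ (here $\charac F\neq 2$ and $\bar{a}\neq 0$ enter), so it represents $\bar{b}\in F^\times$: say $\bar{b}=\bar{x}_0^{\,2}-\bar{a}\bar{y}_0^{\,2}$ with $(\bar{x}_0,\bar{y}_0)\neq(0,0)$. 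If $\bar{x}_0\neq 0$, pick a lift $y_0\in\mathcal{O}$ of $\bar{y}_0$; then the polynomial $X^2-ay_0^2-b\in\mathcal{O}[X]$ has $\bar{x}_0$ as a root of its reduction, and this root is simple since the derivative there is $2\bar{x}_0\neq 0$, so Hensel's lemma yields $x\in K$ with $x^2-ay_0^2=b$. If instead $\bar{y}_0\neq 0$, one argues symmetrically in the variable $y$, the relevant derivative being $-2\bar{a}\bar{y}_0\neq 0$. Either way $(a,b)_K$ is split.

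For part (b), first suppose $v(a)>0$. Then $X^2-X-a$ reduces to $X(X-1)$, which has two distinct simple roots over $F$, so $X^2-X-a$ has a root $\alpha\in\mathcal{O}$ by Hensel's lemma. Since the roots of this polynomial are $\alpha$ and $1-\alpha$, the identity $(u-\alpha)(u-(1-\alpha))=u^2-u-a=0$ holds in $[a,b)_K$ while neither factor is zero; hence $[a,b)_K$ is not a division algebra, proving $v(a)\leq 0$. Now suppose $v(a)=0$. If $\charac F\neq 2$, then $[a,b)_K\cong(1+4a,b)_K$ with $1+4a,b\in K^\times$, and the claim follows from part (a). If $\charac F=2$, then $v(4a)>0$, so $v(1+4a)=0$, and it remains to show that $v(b)$ even forces $[a,b)_K$ to be split; after rescaling $v$ (replacing $b$ by $b\pi^{-2k}$ for a uniformizer $\pi$) I may assume $b\in\mathcal{O}^\times$. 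By \Cref{csa0}(1) it suffices to solve $b=x^2+xy-ay^2$ in $K$. Its reduction is a nonsingular binary form over $F$ — the cross term $xy$ has coefficient $1$, which is exactly what guarantees nonsingularity in characteristic $2$ — so it represents $\bar{b}$: say $\bar{b}=\bar{x}_0^{\,2}+\bar{x}_0\bar{y}_0-\bar{a}\bar{y}_0^{\,2}$ with $(\bar{x}_0,\bar{y}_0)\neq(0,0)$. In characteristic $2$ the partial derivatives of $x^2+xy-ay^2$ reduce to $\bar{y}$ and $\bar{x}$; so if $\bar{x}_0\neq 0$ I fix a lift $x_0$ of $\bar{x}_0$ and apply Hensel's lemma to $-aY^2+x_0Y+(x_0^2-b)$ in the variable $y$ (the root $\bar{y}_0$ is simple since the derivative there reduces to $\bar{x}_0\neq 0$), while if $\bar{y}_0\neq 0$ I fix a lift of $\bar{y}_0$ and apply Hensel in the variable $x$. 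Either way $[a,b)_K$ is split.

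The part requiring the most care is the residue characteristic $2$ case of (b). There the isomorphism $[a,b)_K\cong(1+4a,b)_K$ is unavailable, so one must work directly with the form $x^2+xy-ay^2$ from \Cref{csa0}(1); the subtlety is that in characteristic $2$ the naive derivative $2x$ vanishes, so Hensel's lemma has to be applied in whichever of the two variables has a nonvanishing residue at the chosen solution of the reduced equation. This is why the argument keeps both variables in play rather than specialising one of them and reducing to a one-variable equation.
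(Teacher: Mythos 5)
Your proof is correct and follows the same overall architecture as the paper's: argue by contraposition, use the splitting criteria of \Cref{csa0}, solve the relevant equation over the residue field, and lift via Hensel's lemma. There are two places where you take a slightly different route. For $v(a)>0$, the paper reduces the case $\charac(F)\neq 2$ to part~(a) by observing that $1+4a$ is then a square, and in residue characteristic $2$ it absorbs this case into the same ``choose a suitable $y$ and apply Hensel'' computation as the $v(a)=0$ case; you instead lift a root $\alpha$ of $X^2-X-a$ and exhibit the explicit zero divisors $(u-\alpha)(u-(1-\alpha))$, which handles all residue characteristics at once and avoids invoking \Cref{csa0} there. Second, in residue characteristic $2$ with $v(a)=0$, the paper uses the fact that every element of a finite field of characteristic $2$ is a square to pick $y$ with $a+by^2\equiv 0 \bmod \mathfrak{m}$ and then solves for $x$; you instead appeal to the general fact that a nonsingular binary quadratic form over a finite field is universal (the cross term $xy$ guaranteeing nonsingularity in characteristic $2$), and then apply Hensel in whichever variable has nonvanishing residue. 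Both variants are correct and comparably elementary; your phrasing is a little more conceptual and symmetric between the two characteristic cases, while the paper's is a little more self-contained (it needs only that squaring is onto in $F$, rather than the classification of binary forms). One small caveat: your uniform zero-divisor argument for $v(a)>0$ silently uses that $u-\alpha$ and $u-(1-\alpha)$ are both nonzero because $u\notin K$, which is fine, but worth stating; the paper avoids this by never leaving the quadratic-form framework.
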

\begin{proof}
We denote by $\mathcal{O}$ the valuation ring of $v$ and by $\mathfrak{m}$ its maximal ideal.

Assume $\charac(F) \neq 2$ and that $v(a)$ and $v(b)$ are both even; after multiplying~$a$ and~$b$ with a square -- which does not change the isomorphism class of $(a, b)_K$ -- we may assume $v(a) = v(b) = 0$. Then $a$ and $b$ are non-zero modulo $\mathfrak{m}$, whereby the equation $b = X^2 - aY^2$ has a solution modulo $\mathfrak{m}$ \cite[62:1]{OMe00} and by Hensel's Lemma \cite[Theorem 1.3.1]{Eng05} it then has a solution over $K$, whence $(a, b)_K$ is split by \Cref{csa0}. This shows part (a).

If $\charac(F) \neq 2$, then also $\charac(K) \neq 2$, whereby $[a, b)_K \cong (1+4a, b)_K$ and part (b) follows from part (a): if $v(a) > 0$, then $1+4a$ is a square in $K$ by Hensel's Lemma. Assume for the rest of the proof that $\charac(F) = 2$. 

We may multiply $b$ by a square and assume without loss of generality that $v(b) \in \lbrace 0, 1 \rbrace$. If either $v(a) > 0$ or $v(a) = v(b) = 0$, then we can find a non-zero $y \in \mathcal{O}$ such that $a + by^2 \equiv 0 \bmod \mathfrak{m}$, as $F$ is a finite field and of characteristic $2$, whereby every element of $\mathcal{O}$ is a square modulo $\mathfrak{m}$. Then the polynomial $X^2 - X - a - by^2$ has a simple root modulo $\mathfrak{m}$; by Hensel's Lemma it then has a root in $K$, i.e. there exists an $x \in K$ with $0 = x^2 - x - a - by^2$ and thus
\begin{displaymath}
b = \left(\frac{x}{y}\right)^2+ \left(\frac{x}{y}\right)\left(\frac{-1}{y}\right) - a\left(\frac{-1}{y}\right)^2,
\end{displaymath} 
whereby $[a, b)_K$ is split in light of \Cref{csa0}.
\end{proof}
\begin{prop}\label{pitchfork2}
Let $K$ be a local field with canonical discrete valuation ring $\mathcal{O}$ and residue field $F$. Let $d \in \mathcal{O}$ be such that $X^2 - X - d$ is irreducible over the residue field. Then for all $b \in K^\times$ we have that $[d, b)_K$ is split if and only if $v(b)$ is even.
\end{prop}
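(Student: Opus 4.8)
The plan is to translate the statement, via the splitting criterion of \Cref{csa0}, into the assertion that the binary form $x^2 + xy - dy^2$ --- the norm form of the (necessarily unramified) quadratic extension $K[X]/(X^2-X-d)$ --- represents precisely the elements of $K^\times$ of even valuation. Two preliminary normalisations follow from the irreducibility of $X^2 - X - d$ over $F$. First, $v(d) = 0$: we have $v(d) \geq 0$ since $d \in \mathcal{O}$, and $v(d) > 0$ would make $X^2 - X - d \equiv X(X-1) \pmod{\mathfrak{m}}$, which has a root in $F$. Second, $1 + 4d \in \mathcal{O}^\times$: this is clear when $\charac(F) = 2$ since then $4 \in \mathfrak{m}$, and when $\charac(F) \neq 2$ a zero of $1 + 4d$ modulo $\mathfrak{m}$ would make $X^2 - X - d$ reduce to $(X - \tfrac{1}{2})^2$, again yielding a root. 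In particular $b(1+4d) \neq 0$, so \Cref{csa0}(1) is available.

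For the implication ``$v(b)$ even $\Rightarrow$ $[d,b)_K$ split'' I would argue by contraposition using \Cref{necessaryconditionsplitlocal}(b): if $[d,b)_K$ is a division algebra, then since $v(d) = 0$ at least one of $v(1+4d)$ and $v(b)$ is odd, and as $v(1+4d) = 0$ this must be $v(b)$. For the converse, assume $[d,b)_K$ is split; by \Cref{csa0}(1) write $b = x^2 + xy - dy^2$ with $(x,y) \neq (0,0)$, and it remains to show this value has even valuation. If $y = 0$ it equals $x^2$; otherwise put $t = x/y$, so the valuation is $2v(y) + v(t^2 + t - d)$ and it suffices to show $v(t^2+t-d)$ is even. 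If $v(t) < 0$ the term $t^2$ strictly dominates $t$ and $-d$, giving $v(t^2+t-d) = 2v(t)$. If $t \in \mathcal{O}$, then $\bar t^2 + \bar t - \bar d \neq 0$ in $F$, because the roots of $X^2 + X - \bar d$ are the negatives of the roots of $X^2 - X - \bar d$, of which there are none in $F$; hence $t^2 + t - d$ is a unit, of valuation $0$.

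The case analysis in the last step is routine; the one genuinely substantive input is \Cref{necessaryconditionsplitlocal}(b), which has already absorbed the Hensel's-lemma argument that actually produces a splitting when $v(b)$ is even. I do not anticipate a real obstacle, but the point that requires care is the bookkeeping between the polynomial $X^2 - X - d$ appearing in the hypothesis and the form $x^2 + xy - dy^2$ appearing in the splitting criterion, which match up under $x \mapsto -x$ on the residue field as used above.
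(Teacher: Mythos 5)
Your proof is correct and takes essentially the same approach as the paper: the forward implication via the contrapositive of \Cref{necessaryconditionsplitlocal}(b) after observing $v(1+4d)=0$, and the reverse implication by showing via \Cref{csa0} that the binary form $x^2+xy-dy^2$, being anisotropic modulo $\mathfrak{m}$, represents only elements of even valuation. The paper simply asserts the latter fact, whereas you spell out the case analysis and the $x\mapsto -x$ bookkeeping; this is just added detail, not a different route.
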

\begin{proof}
Note that $v(1 + 4d) = 0$, otherwise $X^2 - X - d$ would be reducible over the residue field. Hence, if $v(b)$ is even, it follows from \Cref{necessaryconditionsplitlocal} that $[d, b)_K$ is split.

The form $X^2 - XY - Y^2d$ has no non-trivial zeroes over $F$. Thus the form can only represent elements of $K$ of even value under $v$. Hence if $v(b)$ is odd, there cannot be $x, y \in K$ with $b = x^2 + xy - y^2d$, whereby $[d, b)_K$ is non-split in light of \Cref{csa0}.
\end{proof}
\begin{prop}\label{classlocal}
Let $K$ be a local field. For every quadratic field extension $L/K$ and any quaternion algebra $Q$ over $K$, $Q_L$ is split.
\end{prop}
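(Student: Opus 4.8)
The plan is to reduce to the case that $Q$ is non-split, to identify such a $Q$ up to isomorphism as $[d, \pi)_K$ for a uniformiser $\pi$ and a suitable $d$, and then to verify the splitting over the two kinds of quadratic extension of $K$ directly, using \Cref{csa0} and \Cref{pitchfork2}.

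Let $v$ denote the canonical valuation of $K$, with valuation ring $\mathcal{O}$ and (finite) residue field $F$, and let $L/K$ be a quadratic extension. If $Q$ is split there is nothing to prove, so assume $Q$ is non-split. Since $F$ is finite, the map $x \mapsto x^2 - x$ on $F$ is not surjective, so I can fix $d \in \mathcal{O}$ for which $X^2 - X - d$ has no root in $F$; then $X^2 - X - d$ is irreducible over $F$, hence over $K$, and its root field $K_0 := K(\theta)$ with $\theta^2 - \theta = d$ is the unique unramified quadratic extension of $K$.

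The first main step is to show that $K_0$ splits $Q$. Writing $Q \cong [a, b)_K$ via Albert's parametrisation, \Cref{necessaryconditionsplitlocal} gives $v(a) \leq 0$ together with a parity constraint on $v(1+4a)$ and $v(b)$; a short case analysis — treating $\charac(F) \neq 2$ and $\charac(F) = 2$ separately, and exploiting in the former case that the isomorphism type of $[a, b)_K$ depends on $a$ only through the quadratic extension $K(\sqrt{1+4a})$ — allows one to rewrite $Q$ so that $K_0$ becomes a splitting field. This step is in essence the classification of quaternion algebras over a local field, and I expect it to be where the real work lies. Granting it, \Cref{csa2} yields $Q \cong [d, b)_K$ for some $b \in K^\times$, whereupon \Cref{pitchfork2} forces $v(b)$ to be odd, since $Q$ is non-split; as $[a, bc^2)_K \cong [a, b)_K$ for every $c \in K^\times$, I may rescale $b$ so that $v(b) = 1$, that is, $b = \pi$ is a uniformiser of $K$.

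It remains to show that $[d, \pi)_L$ is split. If $L/K$ is unramified, then $L \cong K_0$, so $L$ contains a root of $X^2 - X - d$, and hence the binary form $X^2 + XY - dY^2$ splits into linear factors over $L$; being isotropic, it is universal over $L$, so it represents $\pi$, and therefore $[d, \pi)_L$ is split by \Cref{csa0}. If $L/K$ is ramified, then $L$ is totally ramified over $K$, so its residue field is again $F$ and $X^2 - X - d$ remains irreducible over it, while $\pi$ has value $2$ for the canonical valuation of $L$; now \Cref{pitchfork2}, applied over $L$, shows that $[d, \pi)_L$ is split. Since every quadratic extension of $K$ is unramified or ramified, this finishes the proof.
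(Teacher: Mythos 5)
The paper's own proof is simply a reference to Pierce, so you are (sensibly) reconstructing an argument rather than reproducing one. Your overall plan is reasonable, and the second half of your proof is correct and clean: once you know $Q \cong [d,\pi)_K$, the case split on $L/K$ unramified versus ramified works, and in particular applying \Cref{pitchfork2} over $L$ with $d$ still Artin--Schreier-irreducible over the unchanged residue field and $w(\pi) = 2$ is a nice way to dispose of the ramified case.

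The gap is exactly where you flag it, and it is not a small one. To invoke \Cref{csa2} and write $Q \cong [d, b)_K$, you must first know that the root field $K_0$ of $X^2 - X - d$ splits $Q$; that is precisely the assertion your ``first main step'' is supposed to establish, and you do not establish it, you only assert that ``a short case analysis'' will do it. The tools actually at hand do not carry you there: \Cref{necessaryconditionsplitlocal} gives only \emph{necessary} conditions for $[a,b)_K$ to be a division algebra, not a normal form, and in residue characteristic $2$ those conditions leave open the case $v(a) < 0$, which corresponds to ramified Artin--Schreier extensions $K(\wp^{-1}(a))$ and is genuinely delicate. What you are appealing to is the uniqueness of the quaternion division algebra over a local field together with the fact that the unramified quadratic extension is a splitting field for it --- i.e.\ the local classification itself --- which is essentially the content of \Cref{classlocal} and is why the paper cites it rather than proving it. As written, the argument reduces the proposition to a statement of comparable depth without proving that statement.
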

\begin{proof}
See \cite[Section 17.10]{Pie82}.
\end{proof}

We call a finite extension of $\mathbb{Q}$ a \textit{number field} and a function field in one variable over a finite field a \textit{global function field}. By a \textit{global field} we mean either a number field or a global function field.

Given a field $K$, we consider the set of $\mathbb{Z}$-valuations on $K$, which we denote by $\mathcal{V}_K$. Given $v \in \mathcal{V}_K$, we may denote by $\mathcal{O}_v$ the valuation ring of $K$ corresponding to the valuation $v$ and by $\mathfrak{m}_v$ the maximal ideal of $\mathcal{O}_v$. We write $K_{v}$ for the fraction field of the completion of $\mathcal{O}_v$.

Note that if $K$ is a global field, then the completion of any discrete valuation ring has a fraction field which is a local field. Furthermore, for all $x \in K^\times$ there exist only finitely many $v \in \mathcal{V}_K$ with $v(x) \neq 0$.

Given a quaternion algebra $Q$ defined over a field $K$, we call $Q$ \textit{nonreal} if $Q$ is split over every real closure of $K$. By definition, if $K$ does not have real closures, all quaternion algebras over $Q$ are nonreal. A quaternion algebra $Q$ over a global field is nonreal if and only if $Q$ is split over every embedding into $\rr$.

Let $Q$ be a quaternion algebra over $K$. We define the \textit{ramification set} of $Q$ as
\begin{displaymath}
\Delta Q = \lbrace v \in \mathcal{V}_K \mid Q_{K_v} \text{ is not split} \rbrace.
\end{displaymath}
The sets $\Delta_{a, b}$ from the introduction (for $a, b \in \qq^\times$) correspond to $\Delta((a, b)_\qq)$ in this new notation.

\begin{stel}[Albert-Brauer-Hasse-Noether]\label{HasseMinkowski}
Let $K$ be a global field and $Q$ a nonreal quaternion algebra over $K$. Then $\Delta Q = \emptyset$ if and only if $Q$ is split.
\end{stel}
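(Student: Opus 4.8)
The plan is to treat the two directions very asymmetrically. The ``if'' direction is immediate and uses neither the nonreality hypothesis nor anything global: from $Q \cong \mathbb{M}_2(K)$ one gets $Q_{K_v} \cong \mathbb{M}_2(K_v)$ for every $v \in \mathcal{V}_K$, so $\Delta(Q) = \emptyset$. For the substantial ``only if'' direction I would recast the splitting of $Q$ as a local--global question about a quadratic form. Writing $Q \cong [a,b)_K$ (or $Q \cong (a,b)_K$ when $\charac K \neq 2$) and applying \Cref{csa0}, the statement ``$Q$ is split'' becomes ``an explicit polynomial equation in two variables built from $a$ and $b$ is solvable over $K$'', i.e.\ the isotropy over $K$ of a ternary quadratic form $q_Q$; likewise ``$Q_{K_v}$ is split'' becomes isotropy of $q_Q$ over $K_v$.

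With this translation in hand I would check that the two hypotheses supply exactly the local input needed: $\Delta(Q) = \emptyset$ says $q_Q$ is isotropic over $K_v$ for all $v \in \mathcal{V}_K$, while ``$Q$ nonreal'' says, by the characterisation of nonreality recorded just before the statement, that $q_Q$ is isotropic over $\rr$ along every embedding $K \hookrightarrow \rr$ --- a condition that is vacuously satisfied when $\charac K > 0$ since then $K$ has no orderings. Hence $q_Q$ is isotropic over every completion of $K$, and the Hasse--Minkowski theorem (classically for number fields, and in its analogue for global function fields) lets me conclude that $q_Q$ is isotropic already over $K$, so that $Q$ is split.

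The step I expect to be the real obstacle is this last one: the Hasse--Minkowski theorem is where all the arithmetic content sits, it lies as deep as global class field theory, and in the form used here it is essentially equivalent to the Albert--Brauer--Hasse--Noether theorem itself --- so within the scope of this paper I would cite it rather than attempt a proof. An equivalent and characteristic-uniform route, which in particular avoids having to massage the quadratic-form reduction in characteristic $2$ (where ternary forms are delicate), is to argue directly in the Brauer group using the fundamental exact sequence of global class field theory:
\[
0 \longrightarrow \mathrm{Br}(K) \longrightarrow \bigoplus_{v} \mathrm{Br}(K_v) \xrightarrow{\ \sum_v \mathrm{inv}_v\ } \qq/\zz \longrightarrow 0.
\]
A quaternion algebra is split exactly when its Brauer class is trivial, and injectivity of the localisation map turns the vanishing of all local components of $[Q]$ --- which is precisely what $\Delta(Q) = \emptyset$ together with nonreality guarantee --- into the vanishing of $[Q]$. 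The only genuinely characteristic-sensitive point of the first route, namely passing between splitting and isotropy of a quadratic form when $\charac K = 2$, is already covered by the characteristic-$2$ part of \Cref{csa0}, and it disappears entirely in the Brauer-group formulation, which makes no reference to the characteristic.
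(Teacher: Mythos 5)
Your argument is correct, and at bottom it takes the same route as the paper: the paper's entire proof is a one-line citation to Neukirch's exact sequence for the Brauer group of a global field, which is precisely the statement you invoke in your Brauer-group argument. The extra reduction in your first route (recasting splitting as isotropy of a ternary form via \Cref{csa0} and invoking Hasse--Minkowski, with nonreality supplying the archimedean local conditions) is also sound and characteristic-uniform, but it buys nothing over the direct Brauer-group citation, which is exactly what the paper defers to.
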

\begin{proof}
See \cite[Theorem 8.1.17]{Neu15}.
\end{proof}
\begin{stel}[Hilbert Reciprocity]\label{Hilbertreciprocity}
Let $K$ be a global field. If $Q$ is a nonreal quaternion algebra over $K$, then $\lvert \Delta Q \rvert \in 2\mathbb{N}$. Conversely, given any subset $S \subseteq \mathcal{V}_K$ with $\lvert S \rvert \in 2\mathbb{N}$, there exists a nonreal quaternion algebra $Q$ over $K$ with $\Delta Q = S$.
\end{stel}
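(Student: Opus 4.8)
The statement splits into two parts, and I would treat them separately. For the first part—that a nonreal quaternion algebra $Q$ has $|\Delta(Q)| \in 2\nat$—I would invoke the exact sequence from class field theory describing the Brauer group of a global field. Concretely, for a global field $K$ there is an exact sequence
\begin{displaymath}
0 \to \Br(K) \to \bigoplus_{v} \Br(K_v) \xrightarrow{\;\sum \mathrm{inv}_v\;} \qq/\zz \to 0,
\end{displaymath}
where $v$ ranges over all places of $K$ (including archimedean ones). A quaternion algebra $Q$ gives a $2$-torsion class in $\Br(K)$, so each local invariant $\mathrm{inv}_v(Q_{K_v})$ lies in $\frac12\zz/\zz$, and it is nonzero precisely when $v \in \Delta(Q)$ (for $v$ a $\zz$-valuation) or when $v$ is a real place at which $Q$ ramifies. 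Since $Q$ is nonreal, the archimedean contributions all vanish, so the sum of invariants over $\mathcal{V}_K$ must be $0$ in $\frac12\zz/\zz$, forcing $|\Delta(Q)|$ to be even. I should double-check that ``nonreal'' as defined in the excerpt (split over every real closure) coincides with ``split at every real place''—this follows because for a global field the real closures correspond, up to conjugacy, exactly to the orderings, hence to the embeddings into $\rr$, which is remarked just before the statement.

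For the converse, given $S \subseteq \mathcal{V}_K$ with $|S| \in 2\nat$, I would use the surjectivity in the same exact sequence: prescribe local invariants $\mathrm{inv}_v = \frac12 + \zz$ for $v \in S$, $\mathrm{inv}_v = 0$ for all other places (in particular $0$ at all archimedean places). Since $\sum_v \mathrm{inv}_v = \frac{|S|}{2} = 0$ in $\qq/\zz$, this family lies in the image of $\Br(K) \to \bigoplus_v \Br(K_v)$, so there is a class $\alpha \in \Br(K)$ with these local invariants. Because every local invariant is $2$-torsion, $\alpha$ is $2$-torsion, and a $2$-torsion Brauer class over a field is represented by a tensor product of quaternion algebras; one then needs that it is in fact represented by a single quaternion algebra. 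Over a global field this holds because a $2$-torsion division algebra has index dividing $2$ (the index equals the exponent over global fields, by Albert–Brauer–Hasse–Noether), so $\alpha$ is the class of a quaternion algebra $Q$. By construction $\Delta(Q) = S$ and $Q$ is split at all archimedean places, hence nonreal.

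The main obstacle is really bookkeeping rather than mathematics: matching the abstract class-field-theoretic formalism (Brauer groups, local invariants, archimedean places) with the concrete, characteristic-free setup of the excerpt—quaternion algebras presented as $[a,b)_K$, ramification sets of $\zz$-valuations only, and the ad hoc notion of ``nonreal.'' In the writeup I would either cite a reference that packages exactly this (e.g. the treatment in \cite{Neu15} surrounding Theorem 8.1.17, which already underlies \Cref{HasseMinkowski}) or, if a self-contained argument is wanted, carefully note that in characteristic $2$ there are no real places and no $2$-torsion subtlety with the exponent-index relation is needed beyond the global case. A secondary point to verify is that the index-equals-exponent fact over global fields legitimately upgrades ``$2$-torsion Brauer class'' to ``class of a quaternion algebra''; this is standard but worth a line.
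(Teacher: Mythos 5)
Your proposal is correct and unpacks exactly the class-field-theoretic argument---the fundamental exact sequence $0 \to \mathrm{Br}(K) \to \bigoplus_v \mathrm{Br}(K_v) \to \qq/\zz \to 0$ together with the index-equals-exponent theorem over global fields---that underlies the reference \cite[Theorem~8.1.17]{Neu15} which the paper simply cites for this statement. One minor trim: the appeal to ``a $2$-torsion Brauer class is represented by a tensor product of quaternion algebras'' (Merkurjev) is an unnecessary detour, since index-equals-exponent over global fields directly yields that the $2$-torsion class $\alpha$ is represented by a central division algebra of degree at most $2$, hence by a quaternion algebra; otherwise the argument, including the observation that nonreal in the sense of the paper coincides with splitting at all archimedean places, is sound.
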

\begin{proof}
See \cite[Theorem 8.1.17]{Neu15}.
\end{proof}
We can derive a more explicit form of the second part of the last theorem.
\begin{gev}\label{globalclassification}
Let $K$ be a global field. Let $S \subseteq \mathcal{V}_K$ be such that $\lvert S \rvert \in 2\mathbb{N}$. Let $d \in K$ be such that $1 + 4d \neq 0$ and suppose that for all $v \in S$, the polynomial $X^2 - X - d$ is irreducible over $K_{v}$. Then there exists $b \in K^\times$ such that $\Delta([d, b)_K) = S$ and $[d, b)_K$ is nonreal.
\end{gev}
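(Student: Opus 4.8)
The plan is to produce the algebra by Hilbert Reciprocity and then force it into Albert's shape $[d,b)_K$ using \Cref{csa2}. By the second part of \Cref{Hilbertreciprocity} there is a nonreal quaternion algebra $Q$ over $K$ with $\Delta(Q) = S$. By \Cref{csa2}, in order to conclude that $Q \cong [d, b)_K$ for some $b \in K$ it suffices to check that the root field $L$ of $X^2 - X - d$ splits $Q$. Once this is done, $b \neq 0$ is automatic (the notation $[d,b)_K$ presupposes $b(1+4d)\neq 0$, and $1+4d \neq 0$ is given), so $b \in K^\times$, while $\Delta([d,b)_K) = \Delta(Q) = S$ and $[d,b)_K \cong Q$ is nonreal. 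Thus the whole argument reduces to showing that $Q_L$ is split.

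If $S = \emptyset$, then $\Delta(Q) = \emptyset$ and $Q$ is nonreal, so $Q$ is split by \Cref{HasseMinkowski}, hence split by every extension of $K$, in particular by the root field of $X^2 - X - d$; so assume $S \neq \emptyset$. Fixing some $v \in S$, the polynomial $X^2 - X - d$ is irreducible over $K_v$ by hypothesis, hence a fortiori irreducible over $K$, so $L = K[X]/(X^2 - X - d)$ is a quadratic field extension of $K$ and $Q_L$ is a quaternion algebra over the global field $L$. I would apply \Cref{HasseMinkowski} over $L$, which requires checking that $Q_L$ is nonreal and that $\Delta(Q_L) = \emptyset$.

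For nonreality: any real closure $R$ of $L$ is also a real closure of $K$ (it is real closed and algebraic over $K$), so $Q_R$ is split because $Q$ is nonreal, and $Q_L \otimes_L R = Q \otimes_K R = Q_R$; hence $Q_L$ is nonreal. For the ramification set, let $w \in \mathcal{V}_L$ and put $v := w|_K \in \mathcal{V}_K$. If $v \notin S$, then $Q_{K_v}$ is already split, so its scalar extension $Q_{L_w}$ is split. If $v \in S$, then $X^2 - X - d$ is irreducible over $K_v$, so $L \otimes_K K_v \cong K_v[X]/(X^2 - X - d)$ is a field; it is therefore the completion $L_w$ at the unique place of $L$ above $v$, and it is a quadratic extension of the local field $K_v$. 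By \Cref{classlocal}, $Q_{K_v}$ is split by $L_w$, i.e.\ $Q_{L_w}$ is split. In all cases $Q_{L_w}$ is split, so $\Delta(Q_L) = \emptyset$, and \Cref{HasseMinkowski} yields that $Q_L$ is split, as needed.

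The one genuinely load-bearing point is the case $v \in S$: the hypothesis that $X^2 - X - d$ is irreducible over $K_v$ is exactly what makes $L_w/K_v$ a quadratic extension of local fields, which is precisely the situation handled by \Cref{classlocal}. Everything else is bookkeeping with scalar extensions and the fact that a split algebra remains split over any extension field.
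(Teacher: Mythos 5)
Your proof is correct and follows essentially the same route as the paper: produce $Q$ with $\Delta(Q)=S$ via Hilbert Reciprocity, show that $Q_L$ is nonreal with $\Delta(Q_L)=\emptyset$ by analysing each place of $L$, invoke Albert--Brauer--Hasse--Noether over $L$, and finish with \Cref{csa2}. The only difference is that you spell out the nonreality of $Q_L$ and separate off the degenerate case $S=\emptyset$, both of which the paper treats as immediate.
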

\begin{proof}
By \Cref{Hilbertreciprocity} there exists a nonreal $K$-quaternion algebra $Q$ such that $\Delta Q = S$. Let $L$ be the splitting field of $X^2 - X -d$ over $K$. Clearly $Q_L$ is again nonreal. We show that $\Delta(Q_L) = \emptyset$; this implies via \Cref{HasseMinkowski} that $Q_L$ splits, and then the statement follows from \Cref{csa2}.

Consider a $\mathbb{Z}$-valuation $w$ of $L$. Then $L_{w} \cong LK_{v}$ for some $\mathbb{Z}$-valuation $v$ of $K$. If $v \not\in S$, then by the choice of $Q$, $Q_{K_v}$ is split, so also $Q_{L_w}$ is split. On the other hand, if $v \in S$, $L_{w}/K_{v}$ is a quadratic extension by the assumption that $X^2 - X - d$ is irreducible over $K_{\mathfrak{p}}$. \Cref{classlocal} then implies that $L_{w}$ splits $Q$. We conclude that, for all $\mathbb{Z}$-valuations $w$ of $L$, $Q_{L_w}$ is split. This shows that $\Delta(Q_L) = \emptyset$.
\end{proof}

For later use, we also mention an instance of the classical Weak Approximation Theorem from valuation theory.
\begin{stel}[Weak Approximation]\label{WAT}
Let $K$ be a field, $v_1, \ldots, v_n$ pairwise distinct $\zz$-valuations on $K$. Then for any $a_1, \ldots, a_n \in K$ and $\gamma_1, \ldots, \gamma_n \in \zz$ there exists an $x \in K$ with
\begin{displaymath}
v_i(x - a_i) > \gamma_i \quad \text{for all } i \in \lbrace 1,\ldots, n \rbrace.
\end{displaymath}
\end{stel}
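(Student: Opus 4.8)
The plan is to carry out the classical elementary argument for independent valuations, which is valid over any field and splits into an auxiliary lemma followed by a construction of approximate idempotents.

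First I would isolate the following lemma: for pairwise distinct $\zz$-valuations $v_1, \dots, v_n$ on $K$ there exists $y \in K$ with $v_1(y) > 0$ and $v_i(y) < 0$ for all $i \in \{2, \dots, n\}$. This is proved by induction on $n$. The case $n = 1$ is just the nontriviality of $v_1$. For $n = 2$ one uses that two distinct $\zz$-valuations have incomparable valuation rings, since the only overrings of the valuation ring of a $\zz$-valuation are itself and $K$; hence one may choose $a \in \mathcal{O}_{v_2} \setminus \mathcal{O}_{v_1}$ and $b \in \mathcal{O}_{v_1} \setminus \mathcal{O}_{v_2}$, and then $y = b/a$ satisfies $v_1(y) > 0$ and $v_2(y) < 0$. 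For the inductive step, apply the lemma to $v_1, \dots, v_{n-1}$ to obtain $z$ and to the pair $v_1, v_n$ to obtain $w$; I claim that $y = z + w^r$ works for all sufficiently large $r \in \nat$. Indeed $v_1(y) > 0$ because both summands have strictly positive $v_1$-value; for $2 \le i \le n - 1$ we get $v_i(y) = \min(v_i(z), r\, v_i(w)) \le v_i(z) < 0$ once $r$ is large enough that $r\, v_i(w) \ne v_i(z)$; and $v_n(y) = r\, v_n(w) < 0$ since $r\, v_n(w) \to -\infty$ eventually dominates $v_n(z)$ in the ultrametric inequality.

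Granting the lemma, for each $j \in \{1, \dots, n\}$ I would apply it with $v_j$ playing the role of $v_1$, obtaining $y_j \in K$ with $v_j(y_j) > 0$ and $v_i(y_j) < 0$ for $i \ne j$, and set $e_j = (1 + y_j^{\,r})^{-1}$. A short computation with the ultrametric inequality then gives $v_j(e_j - 1) = r\, v_j(y_j) > 0$ and $v_i(e_j) = -r\, v_i(y_j) > 0$ for $i \ne j$, with both quantities tending to $+\infty$ as $r \to \infty$; so $e_j$ is close to $1$ at $v_j$ and close to $0$ at the other $v_i$. Finally put $x = \sum_{j=1}^n a_j e_j$. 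Then $x - a_i = a_i(e_i - 1) + \sum_{j \ne i} a_j e_j$, so $v_i(x - a_i)$ is at least the minimum of $v_i(a_i) + r\, v_i(y_i)$ and the numbers $v_i(a_j) - r\, v_i(y_j)$ for $j \ne i$ (discarding the terms with $a_j = 0$), and each of these exceeds $\gamma_i$ as soon as $r$ is large. Since there are only finitely many valuations and finitely many conditions, one value of $r$ serves for all $i$ simultaneously, and this $x$ witnesses the theorem.

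The only genuine obstacle is the inductive step of the lemma: the exponent $r$ must be taken large enough to control all the intermediate valuations $v_2, \dots, v_{n-1}$ at once, while avoiding the finitely many exponents (at most one per valuation) at which the ultrametric inequality would fail to be an equality. Everything else is routine manipulation of the ultrametric inequality.
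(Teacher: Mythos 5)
The paper's ``proof'' of Weak Approximation is only a citation to Engler--Prestel (Theorem 2.4.1 together with Corollary 2.3.2, the latter giving that pairwise distinct $\zz$-valuations are independent). What you have written is a correct and self-contained reconstruction of the classical argument behind that citation. The $n=2$ base case rests on the right fact: a rank-one valuation ring has no overrings in $K$ other than itself and $K$, so two distinct $\zz$-valuation rings are incomparable, giving elements $a,b$ from which $y=b/a$ has the desired signs. Your inductive step $y = z + w^r$ handles the bookkeeping correctly: for $2 \le i \le n-1$ the sign of $v_i(w)$ is immaterial as long as $r$ avoids the at most one exponent per index where $r\,v_i(w) = v_i(z)$ would spoil equality in the ultrametric inequality, and for $v_n$ the term $r\,v_n(w)\to -\infty$ eventually dominates. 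The approximate idempotents $e_j = (1+y_j^r)^{-1}$ and the final combination $x = \sum_j a_j e_j$ are the standard construction, and your estimates for $v_i(e_j-1)$ and $v_i(e_j)$ are right; since there are finitely many valuations and finitely many thresholds $\gamma_i$, one sufficiently large $r$ serves all conditions simultaneously. In short, you prove from scratch what the paper outsources; both routes are essentially the same, and yours trades brevity for self-containedness.
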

\begin{proof}
See \cite[Theorem 2.4.1]{Eng05}; using that distinct $\mathbb{Z}$-valuations are trivially independent by \cite[Corollary 2.3.2]{Eng05}.
\end{proof}
\section{Semilocal subrings}\label{sectsemilocal}
Let $Q, Q'$ be quaternion algebras over $K$ and $L/K$ a quadratic field extension. Consider the following subsets of $K$:
\begin{align*}
S(Q) &= \lbrace \Trd(x) \mid x \in Q \setminus K, \Nrd(x) = 1 \rbrace, \\
T(Q, Q') &= \bigcap \lbrace \mathcal{O}_v \mid v \in \Delta(Q) \cap \Delta(Q') \rbrace.
\end{align*}
We will write $T(Q)$ instead of $T(Q, Q)$. 
We will see that these sets have good $\exists\La_K$-definitions (see \Cref{TIJquantifierslem}). They were introduced in the context of defining subrings of global fields by Poonen in \cite{Poo09} and Koenigsmann in \cite{Koe16}.

Throughout this article, when dealing with subsets of a field $K$, we take the convention that $\bigcap \emptyset = K$.
\begin{stel}\label{AEprop}
Let $Q, Q'$ be nonreal quaternion algebras over a global field $K$. Then
\begin{displaymath}
T(Q,Q') =  S(Q) + S(Q').
\end{displaymath}
\end{stel}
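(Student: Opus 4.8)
The plan is to prove the two inclusions separately, and for each to work place by place (valuation by valuation), reducing everything to the local situation where the theory of quaternion algebras over local fields from Section 4 applies. The key local fact is the description of $S(Q)$ over a local field in terms of the ramification behaviour of $Q$; over a non-split local quaternion algebra, elements of reduced norm $1$ that lie outside $K$ generate the unramified quadratic extension, and this constrains their reduced trace to the valuation ring, whereas over a split local algebra $S(Q)$ can be made to contain elements of arbitrarily negative valuation.

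For the inclusion $\Sigma(Q,Q') \subseteq \bigcap\{\mathcal{O}_v \mid v \in \Delta(Q)\cap\Delta(Q')\}$, I would fix $v \in \Delta(Q)\cap\Delta(Q')$ and show $S(Q) \subseteq \mathcal{O}_v$ (and likewise $S(Q') \subseteq \mathcal{O}_v$), so that $S(Q)+S(Q') \subseteq \mathcal{O}_v$. Over the completion $K_v$, the algebra $Q_{K_v}$ is a non-split quaternion algebra; an element $x \in Q\setminus K$ with $\Nrd(x)=1$ satisfies a quadratic equation $X^2 - \Trd(x) X + 1 = 0$ over $K$, hence generates a quadratic subfield $K(x)$ of $Q$, and $K(x)\otimes_K K_v$ embeds in $Q_{K_v}$. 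A quadratic subfield of a non-split quaternion algebra over a local field is a field (it splits the algebra by \Cref{classlocal}, and cannot be $K_v\times K_v$ since that would split $Q_{K_v}$), so $K_v(x)/K_v$ is a quadratic field extension, hence its valuation ring is the integral closure of $\mathcal{O}_v$, and $x$ — being a norm-$1$ element, hence a unit — is integral over $\mathcal{O}_v$. Then $\Trd(x)$, being the trace of an integral element, lies in $\mathcal{O}_v$. Taking the intersection over all $v \in \Delta(Q)\cap\Delta(Q')$ gives the inclusion.

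For the reverse inclusion, let $t \in \bigcap\{\mathcal{O}_v \mid v \in \Delta(Q)\cap\Delta(Q')\}$; I must write $t = \Trd(x) + \Trd(x')$ with $x \in Q\setminus K$, $x' \in Q'\setminus K$ of reduced norm $1$. Using \Cref{csa2} (or \Cref{csa0}) one identifies, for a suitable parameter $d$, which elements of $K$ arise as $\Trd(x)$ for norm-$1$ $x\in Q\setminus K$: writing $Q\cong[a,b)_K$, the condition becomes solvability of a norm equation, which in turn is a local condition at the finitely many places in $\Delta(Q)$. The strategy is to choose $s, s' \in K$ with $s + s' = t$ so that $s \in S(Q)$ and $s' \in S(Q')$, using Weak Approximation (\Cref{WAT}) to control the behaviour of $s$ and $s'$ simultaneously at all ramified places of $Q$ and $Q'$. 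At a place $v$ in $\Delta(Q)$ but not in $\Delta(Q')$, $S(Q')$ is unconstrained (it contains elements of every sufficiently negative valuation, since $Q'_{K_v}$ is split), so one is free to pick $s$ locally in $S(Q_{K_v})$ near $t$ and set $s' = t - s$; symmetrically at places in $\Delta(Q')\setminus\Delta(Q)$; at places in $\Delta(Q)\cap\Delta(Q')$ one uses that $t\in\mathcal{O}_v$ together with the fact that $S(Q_{K_v})$ contains all of $\mathcal{O}_v$ (every integer is the trace of a norm-$1$ element of the unramified quadratic extension inside a division quaternion algebra), and splits $t$ as a sum of two such integers. Weak approximation then produces global $s, s'$ with the correct local behaviour at every relevant place, and a further application of the Albert–Brauer–Hasse–Noether theorem (\Cref{HasseMinkowski}) upgrades the local solvability of the relevant norm equations to a global solution, exhibiting $s \in S(Q)$ and $s' \in S(Q')$.

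The main obstacle I anticipate is the reverse inclusion, specifically the bookkeeping needed to realise $t$ as a sum of two global trace values: one has to juggle the three types of places ($\Delta(Q)\setminus\Delta(Q')$, $\Delta(Q')\setminus\Delta(Q)$, and $\Delta(Q)\cap\Delta(Q')$) together with the archimedean places and enough auxiliary places to make the global norm equation solvable, and then patch with weak approximation without disturbing any of the constraints. The forward inclusion, by contrast, is a clean local integrality argument. I would also need the precise local statement that over a non-split quaternion algebra over a local field the set $S(Q)$ equals the set of traces of norm-$1$ units of the unramified quadratic extension, i.e. all of $\mathcal{O}_v$ up to the relevant normalisation — this is where characteristic $2$ is handled uniformly via the $[a,b)_K$ presentation rather than the $(a,b)_K$ one.
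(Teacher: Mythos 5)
Your overall plan --- the forward inclusion by a local integrality argument, the reverse inclusion by assembling local conditions via Weak Approximation and globalising via Albert--Brauer--Hasse--Noether --- is the natural one; the paper itself gives no proof here but cites Dittmann's Proposition~2.9 for the case $Q = Q'$. Your forward inclusion is sound: for $v \in \Delta(Q)$, a norm-$1$ element $x \in Q \setminus K$ generates a quadratic subfield whose tensor with $K_v$ must be a field (zero divisors would contradict $Q_{K_v}$ being a division algebra), and a norm-$1$ element of a quadratic extension of a local field has value zero, hence integral trace.

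The reverse inclusion contains a genuine error: the claim that $S(Q_{K_v}) \supseteq \mathcal{O}_v$ for non-split $Q_{K_v}$ is false. An $x$ with $\Trd(x) = t$ and $\Nrd(x) = 1$ satisfies $x^2 - tx + 1 = 0$, and in a division algebra, if $X^2 - tX + 1$ has a root $\alpha \in K_v$, then $x = \alpha \in K_v$, which is excluded from $S$. So $t \in S(Q_{K_v})$ if and only if $X^2 - tX + 1$ is \emph{irreducible} over $K_v$, and this fails for many $t \in \mathcal{O}_v$: always for $t = \pm 2$ (double root), for $t = 0$ whenever $-1 \in K_v^{\times 2}$, and in general for any unit $t$ whose residue $\bar{t}$ satisfies that $\bar{t}^2 - 4$ is a nonzero square in the residue field --- roughly half the residues. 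Your parenthetical justification also fails: even when $X^2 - tX + 1$ is irreducible, its root field can be ramified rather than the unramified quadratic extension. What is actually needed at each $v \in \Delta(Q) \cap \Delta(Q')$ is the finer assertion $\mathcal{O}_v \subseteq S(Q_{K_v}) + S(Q_{K_v})$, i.e.\ that every $t \in \mathcal{O}_v$ splits as $s_1 + s_2$ with both $X^2 - s_i X + 1$ irreducible over $K_v$. This is true, but it requires a residue-field analysis and a separate treatment of the boundary residues $\bar{s}_i = \pm 2$, where one must force $v(s_i^2 - 4)$ to be odd by choosing higher-order terms. That local decomposition is the real content of the proposition at the shared ramified places, and it is exactly the step your sketch replaces by a false simplification. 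Once it is in hand, the Weak Approximation and ABHN steps you describe do complete the argument.
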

\begin{proof}
In \cite[Proposition 2.9]{Dit17} a proof is given in the case $Q = Q'$; by inspection one sees how the proof can be easily modified to cover the general case.
\end{proof}

For a subset $A$ of a field $K$, denote $A^{-1} = \lbrace x \in K^\times \mid x^{-1} \in A \rbrace$ and $A^\times = A \cap A^{-1}$.
\begin{lem}\label{TIJquantifierslem}
There exist $\exists_7\La$-formulas $\varphi_1, \varphi_2, \varphi_3$ in the variables $(X, A, B, A', B')$ such that, for all global fields $K$ and for all $a, b, a', b' \in K$ with $(1+4a)b'(1+4a')b' \neq~0$ and such that $[a, b)_K$ and $[a', b')_K$ are nonreal, we have
\begin{align*}
T([a, b)_K, [a', b')_K) &= \lbrace x \in K \mid K \models \varphi_1(x, a, b, a', b') \rbrace, \\
T([a, b)_K, [a', b')_K)^{-1} &= \lbrace x \in K \mid K \models \varphi_2(x, a, b, a', b') \rbrace, \\
T([a, b)_K, [a', b')_K)^{\times} &= \lbrace x \in K \mid K \models \varphi_3(x, a, b, a', b') \rbrace.
\end{align*}
\end{lem}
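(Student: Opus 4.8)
The plan is to obtain $\varphi_1,\varphi_2,\varphi_3$ by transcribing the definitions of $\Sigma(Q,Q')$, $\Sigma(Q,Q')^{-1}$ and $\Sigma(Q,Q')^{\times}$ directly into $\La$, using the formulas for $\Trd$ and $\Nrd$ on $[a,b)_K$ recalled in Section~3, and then performing some surgery to bring the number of existential quantifiers down to $7$. Writing an element of $[a,b)_K$ as $z=z_1+z_2u+z_3v+z_4uv$, one has $\Trd(z)=2z_1+z_2$, $\Nrd(z)=z_1^2+z_1z_2-az_2^2-b(z_3^2+z_3z_4-az_4^2)$, and $z\in[a,b)_K\setminus K$ exactly when $(z_2,z_3,z_4)\neq(0,0,0)$. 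Hence $S([a,b)_K)=\{\Trd(z)\mid z\in[a,b)_K\setminus K,\ \Nrd(z)=1\}$ is definable by an $\exists_4\La$-formula with bound variables $z_1,z_2,z_3,z_4$, and so $\Sigma(Q,Q')=S(Q)+S(Q')$ is definable by an $\exists_8\La$-formula: $x\in\Sigma(Q,Q')$ if and only if there exist $z\in Q\setminus K$ and $z'\in Q'\setminus K$ with $\Nrd(z)=\Nrd(z')=1$ and $\Trd(z)+\Trd(z')=x$.

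To reach rank $7$ I would eliminate one variable using the ``trace equation'' $(2z_1+z_2)+(2z_1'+z_2')=X$, treating the two characteristics separately and gluing the resulting formulas together via the quantifier-free $\La$-sentence $1+1\doteq0$. When $\charac K\neq2$, put $w:=X-z_2-2z_1'-z_2'$ (a polynomial in the remaining variables, equal to $2z_1$) and replace the equation $\Nrd(z)=1$ by the denominator-free equation $w^2+2wz_2-4az_2^2-4b(z_3^2+z_3z_4-az_4^2)=4$, equivalent to it since $4\neq0$; this removes $z_1$, leaving the seven variables $z_2,z_3,z_4,z_1',z_2',z_3',z_4'$. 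When $\charac K=2$ one has $\Trd(z)=z_2$, so the trace equation becomes $z_2=X+z_2'$ and $z_2$ can be removed directly, leaving $z_1,z_3,z_4,z_1',z_2',z_3',z_4'$. Both branches keep exactly seven variables, so we obtain an $\exists_7\La$-formula $\varphi_1$.

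The formulas $\varphi_2$ and $\varphi_3$ are produced the same way after clearing denominators. For any quaternion algebra $Q$ over $K$ and any $x\in K^\times$, the map $z\mapsto x^{-1}z$ is a bijection $\{z\in Q\setminus K\mid\Nrd(z)=x^2\}\to\{y\in Q\setminus K\mid\Nrd(y)=1\}$ that divides the reduced trace by $x$, so $\{\Trd(z)\mid z\in Q\setminus K,\ \Nrd(z)=x^2\}=x\cdot S(Q)$. As $\Sigma(Q,Q')^{-1}=\{x\in K^\times\mid x^{-1}\in\Sigma(Q,Q')\}$ by definition, this gives: $x\in\Sigma(Q,Q')^{-1}$ if and only if $x\neq0$ and there are $z\in Q\setminus K$, $z'\in Q'\setminus K$ with $\Nrd(z)=\Nrd(z')=x^2$ and $\Trd(z)+\Trd(z')=1$ (multiply $x^{-1}=\Trd(y)+\Trd(y')$ by $x$). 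Conjoining $\neg(X\doteq0)$ and applying the elimination-and-gluing step above yields an $\exists_7\La$-formula $\varphi_2$. For $\varphi_3$, I would first use the elementary observation that for a $\zz$-valuation $v$ on $K$ and $x\in K^\times$ one has $v(x)=0$ if and only if $x+x^{-1}\in\mathcal{O}_v$; combined with \Cref{AEprop}, which gives $\Sigma(Q,Q')=\bigcap\{\mathcal{O}_v\mid v\in\Delta(Q)\cap\Delta(Q')\}$ because $Q$ and $Q'$ are nonreal, this shows $\Sigma(Q,Q')^{\times}=\{x\in K^\times\mid x+x^{-1}\in\Sigma(Q,Q')\}$. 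By the displayed identity, $x+x^{-1}\in\Sigma(Q,Q')$ is equivalent to the existence of $z\in Q\setminus K$, $z'\in Q'\setminus K$ with $\Nrd(z)=\Nrd(z')=x^2$ and $\Trd(z)+\Trd(z')=x^2+1$; conjoining $\neg(X\doteq0)$ and eliminating one variable per characteristic as before produces an $\exists_7\La$-formula $\varphi_3$.

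The remaining work is bookkeeping rather than insight: one must check that the three transcriptions define exactly $\Sigma(Q,Q')$, $\Sigma(Q,Q')^{-1}$, $\Sigma(Q,Q')^{\times}$, paying particular attention to the cases $\Delta(Q)\cap\Delta(Q')=\emptyset$ (where the three sets are $K$, $K^\times$, $K^\times$ respectively, under the convention $\bigcap\emptyset=K$) and $Q$ or $Q'$ split, which together are what force the clause $\neg(X\doteq0)$ in $\varphi_2$ and $\varphi_3$: without it $0$ would satisfy the remaining conditions whenever both algebras split. The one genuinely clever ingredient is the equivalence $v(x)=0\iff x+x^{-1}\in\mathcal{O}_v$, which lets $\Sigma^{\times}$ be defined with rank $7$ instead of the rank $14$ that a naive intersection of the definitions of $\Sigma$ and $\Sigma^{-1}$ would give; note also that nonreality of $Q$ and $Q'$ is used, via \Cref{AEprop}, only for $\varphi_3$. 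I expect the main obstacle to be the characteristic $2$ case: multiplying the reduced-norm equation by $4$ annihilates it there, so the separate branch based on $\Trd(z)=z_2$ is genuinely necessary, and one has to confirm that the glued formula really does have rank exactly $7$.
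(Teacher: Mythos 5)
Your proof is correct and follows essentially the same strategy as the paper: transcribe the reduced trace and norm formulas for $[a,b)_K$ into $\La$, eliminate one variable via the trace equation to hit rank~$7$, and obtain $\varphi_2,\varphi_3$ from $\varphi_1$ by the substitutions $x\mapsto x^{-1}$ and $x\mapsto x+x^{-1}$ followed by clearing denominators, with \Cref{AEprop} justifying the latter via $x\in\mathcal{O}_v^\times\iff x+x^{-1}\in\mathcal{O}_v$. You correctly identify that nonreality is only invoked (through \Cref{AEprop}) for $\varphi_3$.

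The one place you made life unnecessarily hard is the variable-elimination step, where you split on the characteristic and glue with $1+1\doteq 0$. You eliminate $z_1$, whose coefficient in $\Trd(z)=2z_1+z_2$ is $2$, forcing you to multiply the norm equation by $4$ and hence to branch when $4=0$. The paper instead eliminates $z_2$, whose trace coefficient is $1$: writing $z_2=t-2z_1$ is a polynomial substitution with no denominators, valid in every characteristic, so the case split never arises and one gets a single uniform $\exists_7\La$-formula (the paper organizes the seven bound variables as $y,x_1,x_3,x_4,x_1',x_3',x_4'$ with $y$ playing the role of $\Trd$ of the first factor, and $x-y$ that of the second). A further small difference: the paper silently omits the condition $z\notin K$ from the $\exists$-description of $S([a,b)_K)$, which can enlarge $S$ at most by a subset of $\{-2,2\}\subseteq\bigcap_v\mathcal{O}_v$; under the nonreality hypothesis this does not change $S(Q)+S(Q')$ by \Cref{AEprop}, so both your version (retaining the inequation) and the paper's are correct, the paper's being marginally shorter.
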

\begin{proof}
By the formulas for reduced trace and norm given in the third section, we have for $a, b \in K$ with $(1+4a)b \neq 0$ that
\begin{displaymath}
S([a, b)_K) = \left\lbrace t \in K \middle| \begin{array}{l}
\exists x_1, x_3, x_4 \in K : (x_1^2 + x_1(t - 2x_1) - a(t - 2x_1)^2 \\ - b(x_3^2 + x_3x_4 - ax_4^2) = 1)
\end{array} \right\rbrace.
\end{displaymath}
Thus there is an $\exists_3\La$-formula taking $a$ and $b$ as parameters defining $S([a, b)_K)$. As such, there is an $\exists_7\La$-formula $\varphi_1$ defining $T([a, b)_K, [a', b')_K)$, since for $x \in K$ one has
\begin{displaymath}
x \in T([a, b)_K, [a', b')_K) \Leftrightarrow \exists y \in K : (y \in S([a, b)_K) \text{ and } x - y \in S([a', b')_K)).
\end{displaymath}
To find $\varphi_2$, one expresses that $x \neq 0$, subtitutes $\frac{1}{x}$ for $x$ into $\varphi_1$ and then clears denominators. For $\varphi_3$, it suffices to express that $x \neq 0$, substitute $\frac{x^2 + 1}{x}$ for $x$ in $\varphi_1$ and clear denominators. Indeed we have for nonreal quaternion algebras $Q = [a, b)_K$ and $Q' = [a', b')_K$ that
\begin{displaymath}
T(Q,Q')^\times = \bigcap \lbrace \mathcal{O}_v^\times \mid v \in \Delta(Q) \cap \Delta(Q') \rbrace.
\end{displaymath}
For a valuation $v \in \mathcal{V}_K$ and $x \in K^\times$, it is easy to see that $x \in \mathcal{O}_v^\times$ if and only if $\frac{x^2 + 1}{x} \in \mathcal{O}_v$. Hence $x \in T(Q, Q')^\times$ if and only if $\frac{x^2 + 1}{x} \in T(Q, Q')$.
\end{proof}
\begin{prop}\label{AEpropgev2}
Let $K$ be a global field. Let $S$ be a non-empty, finite subset of~$\mathcal{V}_K$. Then the subsets $\bigcap_{v \in S} \mathcal{O}_v$ and $\bigcap_{v \in S} \mathcal{O}_v^\times$ have $\exists_7\La_K$-definitions.
\end{prop}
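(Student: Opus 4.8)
The plan is to read off the two formulas directly from \Cref{TIJquantifierslem}. Concretely, I will produce $d,b_1,b_2\in K$ with $1+4d\neq 0$, with $b_1,b_2\in K^\times$, and with $[d,b_1)_K$ and $[d,b_2)_K$ nonreal, such that $\Delta([d,b_1)_K)\cap\Delta([d,b_2)_K)=S$. Given such elements, \Cref{AEprop} yields $\Sigma([d,b_1)_K,[d,b_2)_K)=\bigcap_{v\in S}\mathcal{O}_v$, and the identity $\Sigma(Q,Q')^\times=\bigcap\{\mathcal{O}_v^\times\mid v\in\Delta(Q)\cap\Delta(Q')\}$ recorded in the proof of \Cref{TIJquantifierslem} gives $\Sigma([d,b_1)_K,[d,b_2)_K)^\times=\bigcap_{v\in S}\mathcal{O}_v^\times$. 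Substituting the now fixed parameters $d,b_1,d,b_2$ into the $\exists_7\La$-formulas $\varphi_1$ and $\varphi_3$ of \Cref{TIJquantifierslem} then produces the required $\exists_7\La_K$-definitions, the hypothesis $(1+4d)b_1(1+4d)b_2\neq 0$ being satisfied.

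To build $d,b_1,b_2$ I would first take care of parities. By \Cref{Hilbertreciprocity} the ramification set of a nonreal quaternion algebra over $K$ has even cardinality, so in general one cannot demand $\Delta([d,b_i)_K)=S$. Since $\mathcal{V}_K$ is infinite, I pick distinct $w_1,w_2\in\mathcal{V}_K\setminus S$ and set $S_i=S$ if $\lvert S\rvert$ is even and $S_i=S\cup\{w_i\}$ if $\lvert S\rvert$ is odd; then $\lvert S_1\rvert,\lvert S_2\rvert\in 2\nat$ while still $S_1\cap S_2=S$ (when $\lvert S\rvert$ is even one may simply take $S_1=S_2$ and $b_1=b_2$). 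Next I choose $d$: for each $v\in S_1\cup S_2$ the residue field $F_v$ is finite and hence has a separable quadratic extension, so there is $d_v\in\mathcal{O}_v$ with $X^2-X-d_v$ irreducible over $F_v$; by Weak Approximation (\Cref{WAT}) there is $d\in K$ with $v(d-d_v)>0$ for all $v\in S_1\cup S_2$. Then for each such $v$ the polynomial $X^2-X-d$ has $d\in\mathcal{O}_v$ and reduces modulo $\mathfrak{m}_v$ to the irreducible polynomial $X^2-X-\bar d_v$ over $F_v$, so (by Gauss's lemma) $X^2-X-d$ is irreducible over $K_v$; in particular $1+4d\neq 0$, as $S\neq\emptyset$. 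Applying \Cref{globalclassification} to $(S_1,d)$ and to $(S_2,d)$ yields $b_1,b_2\in K^\times$ with $[d,b_i)_K$ nonreal and $\Delta([d,b_i)_K)=S_i$, whence $\Delta([d,b_1)_K)\cap\Delta([d,b_2)_K)=S_1\cap S_2=S$, completing the construction.

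The step requiring the most attention is the choice of $d$: one must check that the local irreducibility conditions needed to invoke \Cref{globalclassification} can be imposed at all places of $S_1\cup S_2$ simultaneously, and that each such condition depends only on $d$ modulo $\mathfrak{m}_v$, which is what makes Weak Approximation applicable. The reduction-of-polynomials argument does this uniformly in residue characteristic $2$ and odd residue characteristic, and this uniformity is precisely what lets the characteristic $2$ case go through. Everything else — the parity padding, the nonemptiness of $\mathcal{V}_K\setminus S$, and the automatic nonvanishing of $1+4d$ — is routine.
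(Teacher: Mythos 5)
Your proof is correct and follows essentially the same route as the paper: pad $S$ to two even-cardinality sets $S_1, S_2$ with $S_1 \cap S_2 = S$, produce nonreal quaternion algebras with ramification sets $S_1$ and $S_2$, and then apply \Cref{AEprop} together with the formulas of \Cref{TIJquantifierslem}. The only difference is that the paper simply invokes \Cref{Hilbertreciprocity} and relies on the general fact (stated in Section~3) that every quaternion algebra can be written in the form $[a,b)_K$, whereas you take the extra step of constructing a single $d$ by Weak Approximation and then invoking \Cref{globalclassification} to realise both algebras explicitly as $[d,b_1)_K$ and $[d,b_2)_K$; this is a harmless and slightly more self-contained instantiation of the same argument.
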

\begin{proof}
We can find two finite sets $S_1,S_2 \subseteq \mathcal{V}_K$ of even cardinality such that $S_1 \cap S_2 = S$; by \Cref{Hilbertreciprocity} we can find nonreal $K$-quaternion algebras $Q_1$ and $Q_2$ such that $\Delta(Q_1) = S_1$ and $\Delta(Q_2)=S_2$. We obtain that $\bigcap_{v\in S} \mathcal{O}_v = T(Q_1, Q_2)$; by \Cref{TIJquantifierslem} this set and $T(Q_1, Q_2)^\times$ have an $\exists_7\La_K$-definition.
\end{proof}
For $c \in K^\times$, we define the following finite subsets of $\mathcal{V}_K$:
\begin{align*}
\Odd(c) &= \lbrace v \in \mathcal{V}_K \mid v(c) \text{ is odd} \rbrace \\
\Neg(c) &= \lbrace v \in \mathcal{V}_K \mid v(c) < 0 \rbrace.
\end{align*}

For a quaternion algebra $Q$ over $K$ and an element $c \in K^\times$ we define the following subsets of $K$:
\begin{align*}
\square K &= \lbrace x \in K \mid \exists y \in K : x = y^2 \rbrace, \\
J^c(Q) &= \bigcap \lbrace \mathfrak{m}_v \mid v \in \Delta Q \cap \Odd(c) \rbrace, \\
H^c(Q) &= \bigcap \lbrace \mathfrak{m}_v^{-v(c)} \mid v \in \Delta Q \cap \Neg(c) \rbrace.
\end{align*}
The sets $J^c_{a, b}$ from the introduction (for $a, b, c \in \qq^\times$) correspond to $J^c((a, b)_\qq)$ in this notation.
The sets $H^c(Q)$ will in the end only play a role when considering global fields of characteristic 2, but in this section we make no assumptions on the characteristic.
\begin{lem}\label{jacoblem}
Let $K$ be a field, $S \subseteq \mathcal{V}_K$ finite , $R = \bigcap_{v \in S} \mathcal{O}_v$. For $c \in K^\times$ we have
\begin{align*}
(c \cdot \square K \cap (1 - \square K \cdot R^\times))\cdot R &= \bigcap \lbrace\mathfrak{m}_v \mid v \in S \cap \Odd(c) \rbrace, \\
(c^{-1} \cdot R + c \cdot R^{-1})^{-1} \cup \lbrace 0 \rbrace &= \bigcap\lbrace\mathfrak{m}_v^{-v(c)} \mid v \in S \cap \Neg(c) \rbrace.
\end{align*}
\begin{proof}
First we observe that, by Weak Approximation, we have
\begin{displaymath}
\square K \cdot R^\times = \bigcap_{v \in S} v^{-1}(2\mathbb{Z}).
\end{displaymath}
Let $x \in K^\times$ be arbitrary. Let us consider $v \in S \cap \Odd(c)$. If $x \in c \cdot \square K$ then $v(x)$ is odd, and if additionally $x \in 1 - \square K \cdot R^\times$, then it follows that $v(x)$ is strictly positive. Therefore if $x \in c \cdot \square K \cap (1 - \square K \cdot R^\times)$, then $x \in \bigcap \lbrace\mathfrak{m}_v \mid v \in S \cap \Odd(c) \rbrace$, whereby also $Rx \subseteq \bigcap \lbrace\mathfrak{m}_v \mid v \in S \cap \Odd(c) \rbrace$. This proves the left-to-right inclusion in the first equality.

For the other inclusion, let us consider $x \in \bigcap \lbrace\mathfrak{m}_v \mid v \in S \cap \Odd(c) \rbrace$. By Weak Approximation, there exists $z \in K$ such that for all $v \in S \cap \Odd(c)$ we have $v(cz^2) = 1$ and for all $v \in S \setminus \Odd(c)$ we have $v(cz^2) < \min \lbrace 0, v(x) \rbrace$. Then $cz^2 \in (c \cdot \square K \cap (1 - \square K \cdot R^\times))$ and $v(cz^2) < v(x)$ for all $v \in S$, whereby $\frac{x}{cz^2} \in R$. Thus we have $x \in (c \cdot \square K \cap (1 - \square K \cdot R^\times))\cdot R$.

We now give a proof of the second equality. Let $x \in (c^{-1}R + cR^{-1})^{-1}$. Then $x^{-1} = c^{-1}t' + ct^{-1}$ for some $t' \in R$, $t \in R \setminus \lbrace 0 \rbrace$. For $v \in S \cap \Neg(c)$, we have $R \subseteq \mathcal{O}_v$, $v(c^{-1}t') = - v(c) + v(t') > 0$ and $v(ct^{-1}) = v(c) - v(t) < 0$, hence
\begin{displaymath}
v(x^{-1}) = \min \lbrace v(c^{-1}t'), v(ct^{-1}) \rbrace = v(c) - v(t) \leq v(c),
\end{displaymath}
whereby $v(x) \geq -v(c)$. This shows the left-to-right inclusion of the second equality in the statement.

Now let $x \in \bigcap \lbrace \mathfrak{m}^{-v(c)} \mid v \in S \cap \Neg(c) \rbrace$ with $x \neq 0$. Then $v(x) \geq -v(c) > 0$ for all $v \in S \cap \Neg(c)$. We will show that for any $v \in S$ we can find $t_{v}, t'_{v} \in \mathcal{O}_v$ with $t_{v} \neq 0$ such that $t_{v}' = cx^{-1} - c^2t_{v}^{-1}$. Once this is shown, it follows by Weak Approximation that there exist $t, t' \in \bigcap_{v \in S}\mathcal{O}_v = R$, $t \neq 0$ such that $t' = cx^{-1} - c^2t^{-1}$, whereby we will have that $x = (c^{-1}t' + ct^{-1})^{-1}$, as we want to show.

Let us consider $v \in S$. Assume first that $v(x) \geq -v(c)$. In this case we take $t_{v} = xc$ and $t'_v = 0$. Now suppose that $v(x) < -v(c)$. By our assumption on $x$ this is only possible when $v(c) \geq 0$. In this case we take $t_{v} = 1$ and $t'_v = c(x^{-1} - c)$.
\end{proof}
\end{lem}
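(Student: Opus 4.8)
The plan is to establish each of the two equalities by checking the two inclusions separately; everything reduces to valuation bookkeeping together with repeated use of Weak Approximation (\Cref{WAT}). I would first record the auxiliary fact that, because $R^\times = \bigcap_{v\in S}\mathcal{O}_v^\times$, one has $\square K\cdot R^\times = \bigcap_{v\in S}v^{-1}(2\mathbb{Z})$: the inclusion ``$\subseteq$'' is clear since $v(y^2u)$ is even for $v\in S$ and $u\in R^\times$, and for ``$\supseteq$'' one uses \Cref{WAT} to find $y\in K^\times$ with $v(y)=\tfrac12 v(x)$ at every $v\in S$, so that $xy^{-2}\in R^\times$. The convention $\bigcap\emptyset=K$ makes the degenerate cases (e.g.\ $S\cap\Odd(c)=\emptyset$) harmless throughout.

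For the first equality set $A=c\cdot\square K\cap(1-\square K\cdot R^\times)$ and note $0\in A$. If $x\in A$, write $x=cy^2$; then $v(x)$ is odd for $v\in\Odd(c)$ while $v(1-x)$ is even for $v\in S$, and at $v\in S\cap\Odd(c)$ these two facts force $v(x)>0$, so $x\in\mathfrak m_v$, hence $Rx\subseteq\mathfrak m_v$ as $\mathfrak m_v$ is an $\mathcal O_v$-ideal containing $R$. This gives ``$\subseteq$''. Conversely, given $x\neq0$ in the right-hand intersection, use \Cref{WAT} to produce $z\in K^\times$ with $v(cz^2)=1$ for $v\in S\cap\Odd(c)$ and $v(cz^2)<\min\{0,v(x)\}$ for $v\in S\setminus\Odd(c)$; the parity of $v(c)$ in each case makes the prescribed value of $v(z)$ an integer, so \Cref{WAT} applies. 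Then $a:=cz^2\in A$ (one checks $v(1-a)$ is even at each $v\in S$ by the two cases) and $v(x)\geq v(a)$ for all $v\in S$, so $x/a\in R$ and $x=a\cdot(x/a)\in A\cdot R$.

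For the second equality, ``$\subseteq$'' is a direct computation: if $x^{-1}=c^{-1}t'+ct^{-1}$ with $t'\in R$, $t\in R\setminus\{0\}$, then for $v\in S\cap\Neg(c)$ the two summands have distinct valuations ($\geq -v(c)>0$ and $\leq v(c)<0$ respectively), so $v(x^{-1})=v(ct^{-1})\leq v(c)$, i.e.\ $v(x)\geq -v(c)$. For ``$\supseteq$'', fix $x\neq 0$ in the intersection; for each $v\in S$ I would exhibit a local solution $(t_v,t_v')\in(\mathcal O_v\setminus\{0\})\times\mathcal O_v$ of $t_v'=cx^{-1}-c^2t_v^{-1}$, taking $(cx,0)$ when $v(x)\geq -v(c)$ and $(1,c(x^{-1}-c))$ otherwise (the latter case forcing $v\notin\Neg(c)$, hence $v(c)\geq0$, which is what keeps $t_v'$ integral). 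Then \Cref{WAT} yields $t\in K$ with $v(t-t_v)$ large at every $v\in S$; choosing the approximation fine enough ensures $t\in\mathcal O_v$, $t\neq0$, and $t':=cx^{-1}-c^2t^{-1}\in\mathcal O_v$ for all $v\in S$ (the last via the estimate $v(t'-t_v')=2v(c)+v(t-t_v)-v(t)-v(t_v)$), so $t,t'\in R$ and $x=(c^{-1}t'+ct^{-1})^{-1}$.

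The main obstacle is the right-to-left inclusions, where one must realise, via Weak Approximation, an element with prescribed local behaviour at every place of $S$. In the second equality this is delicate because $t'$ is an inverse-linear function of $t$, so one cannot prescribe $t'$ directly but must argue that a sufficiently accurate approximation of the local data $t_v$ keeps $t'$ integral. The bookkeeping at the ``uncontrolled'' places of $S$ lying outside $\Odd(c)$ (push their valuations very negative) respectively outside $\Neg(c)$ (split into two cases) is exactly where the defining data is exploited.
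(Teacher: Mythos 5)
Your proof is correct and essentially identical to the paper's: the same auxiliary observation that $\square K \cdot R^\times = \bigcap_{v\in S}v^{-1}(2\mathbb{Z})$, the same witness $cz^2$ for the right-to-left inclusion in the first equality, and the same two-case local solutions $(t_v,t_v')=(cx,0)$ or $(1,c(x^{-1}-c))$ for the second, with only a slightly more explicit Weak Approximation estimate at the very end where the paper just asserts the lift. One small phrasing slip: $\mathfrak{m}_v$ is an $\mathcal{O}_v$-ideal with $\mathcal{O}_v\supseteq R$, not ``an $\mathcal{O}_v$-ideal containing $R$.''
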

\begin{prop}\label{TIJquantifiers}
There exist an $\exists_{16}\La$-formula $\phi_1(X, A, B)$ and an $\exists_{15}\La$-formula $\phi_2$ in the variables $(X, A, B)$ such that, for all global fields $K$ and $a, b, c \in K$ with $(1+4a)bc \neq 0$ such that $[a, b)_K$ is nonreal, we have
\begin{align*}
J^c([a, b)_K) &= \lbrace x \in K \mid K \models \phi_1(x, a, b, c) \rbrace, \\
H^c([a, b)_K) &= \lbrace x \in K \mid K \models \phi_2(x, a, b, c) \rbrace.
\end{align*}
\end{prop}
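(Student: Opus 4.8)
The plan is to combine the explicit set identities of \Cref{jacoblem} with the existential formulas coming from \Cref{TIJquantifierslem}, using for the ring $R$ of \Cref{jacoblem} the set $\Sigma([a,b)_K)$. Write $Q = [a,b)_K$; by hypothesis $Q$ is nonreal and $(1+4a)bc \neq 0$. Set $S = \Delta(Q)$, a finite subset of $\mathcal{V}_K$, and $R = \bigcap_{v \in S}\mathcal{O}_v$. Applying \Cref{AEprop} with $Q' = Q$ gives $R = \Sigma(Q)$, hence also $R^{-1} = \Sigma(Q)^{-1}$ and $R^{\times} = \Sigma(Q)^{\times}$. Thus \Cref{TIJquantifierslem} supplies $\exists_7\La$-formulas --- the instances $\varphi_1(X,A,B,A,B)$, $\varphi_2(X,A,B,A,B)$ and $\varphi_3(X,A,B,A,B)$ --- which for these parameters $a,b$ define $R$, $R^{-1}$ and $R^{\times}$ respectively, uniformly over all global fields, as long as $(1+4a)b \neq 0$ and $[a,b)_K$ is nonreal. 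Together with the evident $\exists_1\La$-definition of $\square K$, these are the building blocks.

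Since $S \cap \Odd(c) = \Delta(Q) \cap \Odd(c)$ and $S \cap \Neg(c) = \Delta(Q) \cap \Neg(c)$, \Cref{jacoblem} applied to this $S$ and $R$ yields
\begin{align*}
J^c(Q) &= \bigl(c\cdot\square K \cap (1 - \square K\cdot R^{\times})\bigr)\cdot R, \\
H^c(Q) &= \bigl(c^{-1}R + cR^{-1}\bigr)^{-1} \cup \{0\}.
\end{align*}
It remains to turn these right-hand sides into existential formulas of the asserted ranks. For $\phi_1$: a point $x$ lies in $J^c(Q)$ if and only if there are $s,e \in K$, $u \in R^{\times}$ and $w \in R$ with $1 - cs^2 = e^2 u$ and $x = cs^2 w$. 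One eliminates the auxiliary variables $u$ and $w$ by substituting the rational functions $(1-cs^2)e^{-2}$ for $u$ into $\varphi_3$ and $x(cs^2)^{-1}$ for $w$ into $\varphi_1$ and clearing denominators --- exactly the device used in \Cref{TIJquantifierslem} to pass from $\varphi_1$ to $\varphi_2$ and $\varphi_3$. The resulting formula uses the two quantifiers for $s,e$ together with the seven bound variables of each of $\varphi_1$ and $\varphi_3$, i.e. it has rank $16$; the value $x = 0$ is recovered by a suitable choice of $s$, so no separate disjunct is needed. For $\phi_2$: a point $x$ lies in $H^c(Q)$ if and only if $x = 0$, or there are $r \in R$ and $r' \in R^{-1}$ with $1 = c^{-1}rx + cr'x$ (the latter equation forcing $x \neq 0$ for free). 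Eliminating $r$ via $r = cx^{-1}(1 - cr'x)$, substituting into $\varphi_1$ and clearing denominators, one obtains a formula of rank $15$ (one quantifier for $r'$, seven for each of $\varphi_1$ and $\varphi_2$), and the disjunct $X \doteq 0$, of rank $0$, adjoins the point $0$.

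The step I expect to require the most care is the quantifier bookkeeping, and in particular the check that each rational-function substitution followed by clearing of denominators defines precisely the intended set: one must verify that no spurious solutions are created at the loci where a denominator vanishes (the squares $s^2$ and $e^2$, the variable $x$, and the parameters), and that the degenerate situations --- $x \in \lbrace 0, 1 \rbrace$, and the case $\Delta(Q) = \emptyset$ in which $J^c(Q) = H^c(Q) = K$ --- come out correctly; one expects the denominator-clearing to be self-correcting here (e.g. at $s=0$ the $\varphi_1$-part collapses to a condition like $ax^2 = 0$, and at $a=0$ the algebra $Q$ is already split), but this should be verified term by term. Uniformity in $K$, $a$, $b$, $c$ is automatic, since every ingredient --- the formulas of \Cref{TIJquantifierslem}, the definition of $\square K$, and the identities of \Cref{jacoblem} --- is uniform.
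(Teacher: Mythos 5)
Your proposal is correct and follows essentially the same route as the paper: both reduce $J^c$ and $H^c$ to the set-theoretic identities of \Cref{jacoblem} (with $R = \Sigma([a,b)_K)$ via \Cref{AEprop}), then instantiate \Cref{TIJquantifierslem} and clear denominators, arriving at the same quantifier allocation ($1+7+8 = 2+7+7 = 16$ for $J^c$ and $1+7+7 = 15$ for $H^c$, up to relabelling your $s,e$ as the paper's $y,q$). The paper leaves the same denominator-clearing check implicit that you flag as needing care.
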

\begin{proof}
By \Cref{jacoblem} we have that
\begin{align*}
J^c([a, b)_K) &= (c \cdot \square K \cap (1 - \square K \cdot T([a, b)_K)^\times))\cdot T([a, b)_K), \\
H^c([a, b)_K) &= (c^{-1} \cdot T([a, b)_K) + c \cdot T([a, b)_K)^{-1})^{-1} \cup \lbrace 0 \rbrace.
\end{align*}
For $x \in K$ we have that $x \in \square K \cdot T([a, b)_K)^\times$ if and only if
\begin{displaymath}
\exists q \in K^\times : xq^2 \in T([a, b)_K)^\times \cup \lbrace 0 \rbrace
\end{displaymath}
which by \Cref{TIJquantifierslem} can be described with an $\exists_8\La$-formula.

For $x \in K$ we have that $x \in J^c([a, b)_K)$ if and only if
\begin{displaymath}
\exists y \in K^\times : \frac{x}{cy^2} \in T([a, b)_K) \text{ and } 1 - cy^2 \in \square K \cdot T([a, b)_K)^\times.
\end{displaymath}
Hence, invoking \Cref{TIJquantifierslem} and clearing denominators to express that $\frac{x}{cy^2} \in T([a, b)_K)$, we find an $\exists_{16}\La$-formula for $\phi_1$.

To see that $H^c(Q_{a,b})$ can be described with an $\exists_{15}\La$-formula, note that for $x \in K^\times$ we have that $x \in H^c([a, b)_K)$ if and only if
\begin{displaymath}
\exists y \in K^\times : cy \in T([a, b)_K) \text{ and } \frac{1-xy}{cx} \in T([a, b)_K)^{-1},
\end{displaymath}
and then invoke \Cref{TIJquantifierslem} and clear denominators.
\end{proof}

\section{Rings of $S$-integers}\label{sectS}
For the remainder of this article, let $K$ be a global field. For a set $S \subseteq \mathcal{V}_K$, we define the set
\begin{displaymath}
\mathcal{O}_S = \lbrace x \in K \mid \forall v \in \mathcal{V}_K \setminus S : v(x) \geq 0 \rbrace = \bigcap_{v \in \mathcal{V}_K \setminus S} \mathcal{O}_v.
\end{displaymath}
If $S$ is a finite set, we call $\mathcal{O}_S$ \textit{the ring of $S$-integers}.
\begin{prop}\label{EtoA}
Let $V \subseteq \mathcal{V}_K$ be non-empty. Suppose that the set $\bigcup_{v \in V} \mathfrak{m}_v$
has a $\exists_n\La_K$-definition. Then $\bigcap_{v \in V} \mathcal{O}_v$ has a $\forall_{n}\La_K$-definition.
\end{prop}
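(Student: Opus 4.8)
The plan is to dualise. Since a subset of $K$ has a $\forall_{n+1}\La_K$-definition if and only if its complement in $K$ has an $\exists_{n+1}\La_K$-definition (as recorded in Section 2), it suffices to exhibit an $\exists_{n+1}\La_K$-formula defining $K \setminus \bigcap_{v \in V}\mathcal{O}_v$.

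The first step is a pointwise reformulation of membership. For $x \in K^\times$ and $v \in \mathcal{V}_K$ we have $v(x) < 0$ exactly when $v(x^{-1}) > 0$, that is, exactly when $x^{-1} \in \mathfrak{m}_v$; and since $0$ lies in $\mathcal{O}_v$ for every $v$, this yields for all $x \in K$ the equivalence
\begin{displaymath}
x \notin \bigcap_{v \in V}\mathcal{O}_v \quad\Longleftrightarrow\quad x \neq 0 \;\text{ and }\; x^{-1} \in \bigcup_{v \in V}\mathfrak{m}_v .
\end{displaymath}

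The second step is to turn this into a formula of the right rank. Let $\psi(Z)$ be an $\exists_n\La_K$-formula defining $\bigcup_{v \in V}\mathfrak{m}_v$, which exists by hypothesis; we may assume it is written in the normal form \cref{posex}. Then the right-hand side of the displayed equivalence is defined by
\begin{displaymath}
\exists Z \,\bigl( X \cdot Z \doteq 1 \;\wedge\; \psi(Z) \bigr),
\end{displaymath}
because the single equation $X \cdot Z \doteq 1$ simultaneously forces $X \neq 0$ and identifies $Z$ with $X^{-1}$. Rewriting this formula in the shape \cref{posex} — pulling the existential prefix of $\psi$ out past the equation and distributing $X \cdot Z \doteq 1$ over the disjunction — one sees that the new quantifier $\exists Z$ together with the $n$ quantifiers of $\psi$ amount to exactly $n+1$ existential quantifiers. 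Hence $K \setminus \bigcap_{v \in V}\mathcal{O}_v$ has an $\exists_{n+1}\La_K$-definition, and complementing once more gives the desired $\forall_{n+1}\La_K$-definition of $\bigcap_{v \in V}\mathcal{O}_v$.

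I do not anticipate a real obstacle: the core of the argument is just the substitution $z = x^{-1}$ together with the elementary bookkeeping on ranks recalled in Section 2. The two points that need a moment's attention are verifying that conjoining $X \cdot Z \doteq 1$ to $\psi(Z)$ really costs only one extra quantifier, and checking the degenerate element $0$ — which belongs to every $\mathcal{O}_v$, hence to $\bigcap_{v \in V}\mathcal{O}_v$, and is correctly excluded from the complement formula since $X \cdot Z \doteq 1$ has no solution when $X = 0$.
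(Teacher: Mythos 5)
Your proof is correct and takes essentially the same route as the paper: the paper records the identity $\bigcap_{v \in V} \mathcal{O}_v = \bigl(K \setminus \bigl(\bigcup_{v\in V}\mathfrak{m}_v\bigr)^{-1}\bigr) \cup \{0\}$, which is exactly the pointwise equivalence you derive, and the rank-$(n+1)$ universal definition obtained via $\exists Z\,(X\cdot Z \doteq 1 \wedge \psi(Z))$ is the bookkeeping the paper leaves implicit. Your explicit check that $X\cdot Z \doteq 1$ correctly handles the element $0$ corresponds to the paper's $\cup\,\{0\}$ in the displayed identity.
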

\begin{proof}
This follows from the observation that
\begin{displaymath}
\bigcap_{v \in V} \mathcal{O}_v = \left(K \setminus \left(\bigcup_{v \in V} \mathfrak{m}_v\right)^{-1}\right) \cup \lbrace 0 \rbrace.
\end{displaymath}
\end{proof}
\begin{gev}\label{EtoAgev}
Let $S \subseteq \mathcal{V}_K$ be finite and non-empty. Then $\bigcup_{v \in S} \mathfrak{m}_v$ has an $\exists_7\La_K$-definition and $\bigcap_{v \in S} \mathcal{O}_v$ has a $\forall_7\La_K$-definition in $K$.
\end{gev}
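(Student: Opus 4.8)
The plan is to deduce the $\exists_7\La_K$-definability of $\bigcup_{v\in S}\mathfrak{m}_v$ from the single-valuation instance of \Cref{AEpropgev2}, and then to obtain the $\forall_8\La_K$-definability of $\bigcap_{v\in S}\mathcal{O}_v$ as an immediate application of \Cref{EtoA}. The key observation is that, although $\bigcup_{v\in S}\mathfrak{m}_v$ is not a subring of $K$, the set $S$ is finite, so it suffices to give an $\exists_7\La_K$-definition of each single $\mathfrak{m}_v$ and then take the disjunction over $v\in S$.

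Concretely, I would first fix, for each $v\in S$, an element $\pi_v\in K^\times$ with $v(\pi_v)=1$. Since $v$ is a $\zz$-valuation, $\mathfrak{m}_v=\{x\in K\mid v(x)>0\}=\{x\in K\mid v(x)\geq 1\}=\pi_v\mathcal{O}_v$, so $\mathfrak{m}_v$ is a principal fractional $\mathcal{O}_v$-ideal. As $\{v\}$ is a non-empty finite subset of $\mathcal{V}_K$ and $\mathcal{O}_v=\bigcap_{w\in\{v\}}\mathcal{O}_w$, \Cref{AEpropgev2} supplies an $\exists_7\La_K$-formula $\psi_v(X)$ defining $\mathcal{O}_v$. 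Substituting $X/\pi_v$ for $X$ in $\psi_v$ and clearing the denominator $\pi_v$ — a non-zero constant of $\La_K$, so that no new quantifier is introduced — yields an $\exists_7\La_K$-definition of $\pi_v\mathcal{O}_v=\mathfrak{m}_v$. Because $S$ is finite, $\bigcup_{v\in S}\mathfrak{m}_v$ is a finite union of $\exists_7\La_K$-definable sets, hence $\exists_7\La_K$-definable by the remark in Section 2 that a disjunction of $\exists_7\La_K$-formulas is equivalent to an $\exists_7\La_K$-formula. Applying \Cref{EtoA} with $V=S$ and $n=7$ then gives a $\forall_8\La_K$-definition of $\bigcap_{v\in S}\mathcal{O}_v$.

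I would resist the temptation to argue via complements here. One does have that $\bigcup_{v\in S}\mathfrak{m}_v$ is the complement in $K$ of the set of $x\in K^\times$ with $v(x)\leq 0$ for all $v\in S$; that set is $\exists_7\La_K$-definable — it equals $\Sigma(Q_1,Q_2)^{-1}$ for suitable nonreal quaternion algebras with $\Delta(Q_1)\cap\Delta(Q_2)=S$, so \Cref{TIJquantifierslem} applies — but this route only yields a $\forall_7\La_K$-definition of $\bigcup_{v\in S}\mathfrak{m}_v$, which is the wrong quantifier form to feed into \Cref{EtoA}. Handling the $\mathfrak{m}_v$ individually is exactly what turns this around into an existential definition. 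Apart from that, the proof is bookkeeping with no real obstacle; the one point worth isolating is that $\mathcal{O}_v$ for a \emph{single} $\zz$-valuation already falls under \Cref{AEpropgev2} (write $\{v\}=S_1\cap S_2$ with $\lvert S_1\rvert,\lvert S_2\rvert\in 2\nat$), and the only thing to watch is that rewriting $\mathcal{O}_v$ as $\pi_v\mathcal{O}_v$ and then forming the finite disjunction keeps the rank equal to $7$. The formula obtained is permitted to depend on $S$, so no uniformity needs to be arranged at this stage — that is the concern of Section 7.
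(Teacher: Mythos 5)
Your proof is correct and follows essentially the same route as the paper: apply \Cref{AEpropgev2} to the singleton $\{v\}$ to define $\mathcal{O}_v$, rescale by a uniformizer $\pi_v\in\mathfrak{m}_v\setminus\mathfrak{m}_v^2$ (a constant of $\La_K$, so no new quantifiers) to define $\mathfrak{m}_v=\pi_v\mathcal{O}_v$, take the finite disjunction, and feed the result into \Cref{EtoA}. The side remark explaining why the complement route gives the wrong quantifier type is a useful sanity check but is not part of the argument itself.
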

\begin{proof}
The second statement follows from the first one by \Cref{EtoA}. By \Cref{AEpropgev2}, for every $v \in \mathcal{V}_K$, $\mathcal{O}_v$ has an $\exists_7\La_K$-definition. Then the same holds for $\mathfrak{m}_v$: fix an arbitrary $\pi \in \mathfrak{m}_v \setminus \mathfrak{m}^2_{(v)}$; we have $\mathfrak{m}_v = \pi\mathcal{O}_v$, whence $x \in \mathfrak{m}_v$ if and only if $\frac{x}{\pi} \in \mathcal{O}_v$.
If every $\mathfrak{m}_v$ has an $\exists_7\La_K$-definition, then so does a finite union of such sets.
\end{proof}
We will show that for any finite set $S \subseteq \mathcal{V}_K$, there is an $\exists\La_K$-definition of
\begin{displaymath}
\bigcup_{v \in \mathcal{V}_K \setminus S } \mathfrak{m}_v
\end{displaymath}
in $K$. From this we will obtain a $\forall\La_K$-definition of $\mathcal{O}_{S}$ via \Cref{EtoA}.

In particular, setting $S = \emptyset$, we find a universal definition of the ring of integers $\mathcal{O}_K$ in a number field $K$. However, even if one is only interested in the case $S = \emptyset$, it will be crucial in the proof to also allow $S$ to be non-empty.

\begin{lem}\label{reduceToOdd}
Let $V \subseteq V' \subseteq \mathcal{V}_K$ and suppose that $V' \setminus V$ is finite. Assume that $\bigcup_{v \in \mathcal{V}_K \setminus V'} \mathfrak{m}_v$ has an $\exists_n\La_K$-definition. Then the set $\bigcup_{v \in \mathcal{V}_K \setminus V} \mathfrak{m}_v$ has an $\exists_{m}\La_K$-definition with $m = \max\{ n, 7 \}$.
\end{lem}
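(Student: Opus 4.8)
The idea is that $\bigcup_{v \in \mathcal{V}_K \setminus V} \mathfrak{m}_v$ differs from $\bigcup_{v \in \mathcal{V}_K \setminus V'} \mathfrak{m}_v$ only by the contribution of the finitely many valuations in $V' \setminus V$, and this finite contribution can be handled with a bounded number of quantifiers. Write $W = V' \setminus V$, a finite set. Observe the set-theoretic identity
\begin{displaymath}
\bigcup_{v \in \mathcal{V}_K \setminus V} \mathfrak{m}_v = \left(\bigcup_{v \in \mathcal{V}_K \setminus V'} \mathfrak{m}_v\right) \cup \left(\bigcup_{v \in W} \mathfrak{m}_v\right).
\end{displaymath}
The first term on the right has an $\exists_n\La_K$-definition by hypothesis. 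If $W$ is non-empty, then by \Cref{EtoAgev} the finite union $\bigcup_{v \in W} \mathfrak{m}_v$ has an $\exists_7\La_K$-definition; if $W$ is empty the second term is empty (hence trivially $\exists_0\La_K$-definable, with the convention $\bigcup \emptyset = \emptyset$). Thus $\bigcup_{v \in \mathcal{V}_K \setminus V} \mathfrak{m}_v$ is a union of an $\exists_n\La_K$-definable set and an $\exists_7\La_K$-definable set.

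Now I invoke the observation made in Section~2: a disjunction of an $\exists_{m_1}\La_K$-formula and an $\exists_{m_2}\La_K$-formula is equivalent to an $\exists_m\La_K$-formula with $m = \max\{m_1, m_2\}$ (one pads the formula with fewer quantifiers so that both use the same block of variables, then takes the disjunction inside the scope of the quantifiers). Applying this with $m_1 = n$ and $m_2 = 7$ (or $m_2 = 0$ when $W = \emptyset$, which only makes the bound smaller) yields an $\exists_m\La_K$-definition of $\bigcup_{v \in \mathcal{V}_K \setminus V} \mathfrak{m}_v$ with $m = \max\{n, 7\}$, as claimed.

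I do not anticipate a genuine obstacle here: the statement is essentially a bookkeeping lemma combining the finite-union definability from \Cref{EtoAgev} with the rank arithmetic for disjunctions from Section~2. The only point requiring a moment's care is the degenerate case $V' = V$, where one must make sure the quoted results (which assume non-empty index sets) are either sidestepped by the empty-union convention or simply not needed; in that case the conclusion is immediate since $n \le \max\{n,7\}$.
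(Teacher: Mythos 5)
Your proof is correct and takes essentially the same approach as the paper: the same decomposition $\bigcup_{v \in \mathcal{V}_K \setminus V} \mathfrak{m}_v = (\bigcup_{v \in \mathcal{V}_K \setminus V'} \mathfrak{m}_v) \cup (\bigcup_{v \in V' \setminus V} \mathfrak{m}_v)$, the same use of \Cref{EtoAgev} for the finite piece, and the same rank arithmetic for disjunctions from Section~2. Your extra remark about the degenerate case $V'=V$ is a harmless refinement that the paper leaves implicit.
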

\begin{proof}
Since we know from \Cref{EtoAgev} that $\bigcup_{v \in V' \setminus V} \mathfrak{m}_v$ has an $\exists_7\La_K$-definition, we obtain that the set $\bigcup_{v \in \mathcal{V}_K \setminus V} \mathfrak{m}_v$ has an $\exists_{\max\{ n, 7 \}}\La_K$-definition by observing that
\begin{displaymath}
\bigcup_{v \in \mathcal{V}_K \setminus V} \mathfrak{m}_v = \bigcup_{v \in \mathcal{V}_K \setminus V'} \mathfrak{m}_v \cup \bigcup_{v \in V' \setminus V} \mathfrak{m}_v.
\end{displaymath}
\end{proof}
In particular, to prove that $\bigcup_{v \in \mathcal{V}_K \setminus S} \mathfrak{m}_v$ has an $\exists\La_K$-definition for all finite sets $S$, it is enough to show this for sufficiently large sets $S$ of finite cardinality.

Let $S \subseteq \mathcal{V}_K$ be a non-empty finite set, and $u \in \bigcap_{v \in S} \mathcal{O}_v^\times$. Define the set
\begin{displaymath}
\Phi^S_u = \left\lbrace (a, b) \in K^2 \enspace\middle|\enspace b \in \bigcap_{v \in S}\mathcal{O}_v^\times, a \equiv u \bmod \prod_{v \in S}\mathfrak{m}_v \right\rbrace.
\end{displaymath}
\begin{lem}\label{nicomainthmlem}
Let $S \subseteq \mathcal{V}_K$ be a non-empty finite set and $u \in \bigcap_{v \in S} \mathcal{O}_v^\times$. Then the set $\Phi^S_u$ has a $\exists_{14}\La_K$-definition in $K^2$.
\end{lem}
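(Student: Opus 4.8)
The plan is to realise $\Phi^S_u$ as a conjunction of one condition depending only on $b$ and one depending only on $a$, each $\exists_7\La_K$-definable; conjoining two $\exists_7$-formulas then gives an $\exists_{14}\La_K$-formula by the observation following \cref{posex}. The condition $b\in\bigcap_{v\in S}\mathcal{O}_v^\times$ is immediately of this type: $S$ is non-empty and finite, so \Cref{AEpropgev2} provides an $\exists_7\La_K$-formula $\psi_1(B)$ defining it.

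For the condition on $a$, the point to exploit is that here, unlike in \Cref{jacoblem} and \Cref{TIJquantifiers}, the finite set $S$ is given in advance, so we may choose an element adapted to it rather than work with an arbitrary parameter. By Weak Approximation (\Cref{WAT}, applied to the valuations in $S$ with $a_v$ a uniformizer at $v$ and $\gamma_v=1$) there is a fixed $\pi\in K^\times$ with $v(\pi)=1$ for every $v\in S$. Then for any $x\in K$ one has $v(x)\ge 1$ for all $v\in S$ if and only if $v(x/\pi)\ge 0$ for all $v\in S$, i.e.\ $\bigcap_{v\in S}\mathfrak{m}_v=\{\,x\in K\mid x/\pi\in\bigcap_{v\in S}\mathcal{O}_v\,\}$. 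Since $u\in\bigcap_{v\in S}\mathcal{O}_v^\times$, the congruence $a\equiv u\bmod\prod_{v\in S}\mathfrak{m}_v$ amounts exactly to $v(a-u)\ge 1$ for every $v\in S$, hence is equivalent to $(a-u)/\pi\in\bigcap_{v\in S}\mathcal{O}_v$.

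To conclude I would take, again by \Cref{AEpropgev2}, an $\exists_7\La_K$-formula $\psi(X)$ defining $\bigcap_{v\in S}\mathcal{O}_v$, substitute the term $(A-u)/\pi$ for $X$, and clear denominators: as $\pi$ is a fixed non-zero element of $K$, multiplying each polynomial equation and inequation of $\psi$ by a suitable power of $\pi$ converts $\psi\bigl((A-u)/\pi\bigr)$ into an equivalent $\exists_7\La_K$-formula $\psi_2(A)$ in the single free variable $A$ of the same rank. Then $\psi_1(B)\wedge\psi_2(A)$ is equivalent to an $\exists_{14}\La_K$-formula, and by construction it defines $\Phi^S_u$ in $K^2$.

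I do not foresee a real obstacle: the one thing worth noticing is that one may bypass the elaborate descriptions of $J^c$ in \Cref{jacoblem} and \Cref{TIJquantifiers}, since $\pi$ can be picked after $S$ is fixed and then $\bigcap_{v\in S}\mathfrak{m}_v$ is just the scalar multiple $\pi\cdot\bigcap_{v\in S}\mathcal{O}_v$; the remainder is routine tracking of quantifier ranks together with the standard device of clearing denominators by a non-zero constant.
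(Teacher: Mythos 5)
Your proof is correct and follows essentially the same route as the paper: it fixes a uniformizer $\pi$ with $v(\pi)=1$ for all $v\in S$ by Weak Approximation, rewrites the congruence on $a$ as $(a-u)/\pi\in\bigcap_{v\in S}\mathcal{O}_v$ (the paper writes the equivalent $a-u\in\pi\bigcap_{v\in S}\mathcal{O}_v$), and applies \Cref{AEpropgev2} to each of the two conditions to obtain a $7+7=14$ quantifier count.
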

\begin{proof}
By \Cref{AEpropgev2} $\bigcap_{v \in S} \mathcal{O}_v^\times$ has a $\exists_7\La_K$-definition. By Weak Approximation, fix an element $\pi \in K^\times$ with $\pi \in \mathfrak{m}_v \setminus \mathfrak{m}_v^2$ for all $v \in S$. The condition that $a \equiv u \bmod \prod_{v \in S}\mathfrak{m}_v$ can be rewritten as $a-u \in \pi\bigcap_{v \in S} \mathcal{O}_v$. By \Cref{AEpropgev2} this can be described by an $\exists_7\La_K$-formula. This brings the total to $7+7=14$ quantifiers.
\end{proof}
\begin{lem}\label{pitchfork1}
Let $F$ be a finite field. There exists $u \in F$ such that $X^2 - X - u^2$ is irreducible over $F$.
\end{lem}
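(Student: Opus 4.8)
The plan is to split into the cases $\charac F = 2$ and $\charac F$ odd; the polynomial $X^2 - X - u^2$ factors quite differently in the two, and in each a short count does the job.

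Suppose first $\charac F = 2$. Then $X^2 - X - u^2 = X^2 + X + u^2$ has nonzero derivative and is therefore separable, so it is irreducible over $F$ if and only if it has no root in $F$, i.e.\ if and only if $u^2$ lies outside the image of the map $\wp\colon F \to F$, $x \mapsto x^2 - x$. This map is additive with kernel $\{x \in F : x^2 = x\} = \ff_2$, so $\wp(F)$ has exactly $\lvert F \rvert / 2$ elements, hence is a proper subset of $F$. On the other hand, in characteristic $2$ the Frobenius $x \mapsto x^2$ is a bijection of $F$, so every element of $F$ is of the form $u^2$; picking $u$ with $u^2 \notin \wp(F)$ finishes this case.

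Now suppose $\charac F$ is odd. Then $X^2 - X - u^2$ is irreducible over $F$ precisely when its discriminant $1 + 4u^2$ is a non-square in $F$ (where $0$ counts as a square). To produce such a $u$ I would count the affine solutions of $y^2 = 1 + 4u^2$ in $F^2$ in two ways. Setting $s = y - 2u$, the equation reads $s(y + 2u) = 1$, which forces $s \in F^\times$ and $y + 2u = s^{-1}$, and conversely every $s \in F^\times$ gives exactly one pair $(u, y)$; so there are exactly $\lvert F \rvert - 1$ solutions. Grouping solutions instead by their $u$-coordinate, a given $u$ contributes $1$, $2$, or $0$ solutions according as $1 + 4u^2$ is zero, a nonzero square, or a non-square. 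Writing $N_0, N_1, N_2$ for the number of $u$ in each of these three classes, we obtain $N_0 + 2N_1 = \lvert F \rvert - 1$ and $N_0 + N_1 + N_2 = \lvert F \rvert$, hence $N_2 = (\lvert F \rvert + 1 - N_0)/2$; since $1 + 4u^2 = 0$ has at most two solutions in $u$, this gives $N_2 \geq (\lvert F \rvert - 1)/2 \geq 1$, and any $u$ counted by $N_2$ works.

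No step is really an obstacle. The only points needing slight care are invoking the correct irreducibility criterion for a quadratic in each characteristic and, in the odd case, keeping track of the at most two values of $u$ with $1 + 4u^2 = 0$; the slack in the final inequality makes the latter harmless.
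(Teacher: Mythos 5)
Your proof is correct, and in the odd-characteristic case it takes a genuinely different route from the paper's. The paper quotes the fact that every element of a finite field is a sum of two squares, writes a chosen non-square $b$ as $c^2 + d^2$, and sets $u = d/(2c)$, so that $1 + 4u^2 = b/c^2$ is a non-square; this is short but leans on an external result \cite[62:1]{OMe00}. You instead count $F$-points on the conic $y^2 = 1 + 4u^2$ directly: the substitution $s = y - 2u$ exhibits a bijection with $F^\times$, giving exactly $\lvert F\rvert - 1$ points, and comparing against the fibre count over $u$ forces the existence of at least $(\lvert F\rvert - 1)/2 \geq 1$ values of $u$ with $1 + 4u^2$ a non-square. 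Your argument is self-contained and even yields an explicit lower bound on the number of good $u$, at the price of being slightly longer; the paper's is an instantaneous reduction to a standard lemma. In characteristic $2$, both proofs rest on the surjectivity of Frobenius and the fact that the Artin--Schreier map $\wp(x) = x^2 - x$ is not surjective; the paper simply asserts this via the existence of a separable quadratic extension, whereas you make the index-two image of $\wp$ explicit, which is the same idea spelled out.
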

\begin{proof}
If $\charac(F) = 2$, then every element is a square and the statement becomes trivial, as $F$ has a separable quadratic extension. Suppose now that $\charac(F) \neq 2$; we need to show that there exists $u \in F$ such that the discriminant $1 + 4u^2$ is not a square. To this end, take any $b \in F$ which is not a square. Using that every element is a sum of two squares in $F$ \cite[62:1]{OMe00}, write $b = c^2 + d^2$ for some $c, d \in F$. Then $u = \frac{d}{2c}$ does the trick, as $1+4u^2 = \frac{b}{c^2}$ is not a square.
\end{proof}

\begin{lem}\label{nicomainthm}
Let $K$ be a global field. Let $\pi \in K^\times$ be such that $S = \Odd(\pi)$ has odd cardinality. Let $u \in K^\times$ be such that for all $v\in S$, $v(u) = 0$ and $X^2 - X - u^2$ is irreducible over $\mathcal{O}_v/\mathfrak{m}_v$.

If $\charac(K) = 2$, then we have
\begin{displaymath}
\bigcup_{v \in \mathcal{V}_K \setminus S } \mathfrak{m}_v = \bigcup_{(a, b) \in \Phi_u^S} (J^b([a^2, b\pi)_K) \cap H^a([a^2, b\pi)_K)).
\end{displaymath}
If $\charac(K) \neq 2$ and $S$ contains all (finitely many) valuations $v \in \mathcal{V}_K$ with $v(2) > 0$, then we have
\begin{displaymath}
\bigcup_{v \in \mathcal{V}_K \setminus S } \mathfrak{m}_v = \bigcup_{(a, b) \in \Phi_u^S} (J^{1+4a^2}([a^2, b\pi)_K) \cap J^b([a^2, b\pi)_K)).
\end{displaymath}
\end{lem}
\begin{proof}
We start by showing the right-to-left inclusion in both cases. Take an arbitrary $(a, b) \in \Phi_u^S$. By the definition of $\Phi_u^S$ we have for all $v \in S$ that $v(a) = 0$, $v(b\pi) = v(b) + v(\pi) \equiv 0 + 1 \bmod 2$ and $X^2 - X - a^2$ is irreducible over $\mathcal{O}_v/\mathfrak{m}_v$. It follows by \Cref{pitchfork2} that $S \subseteq \Delta[a^2, b\pi)_K$. By \Cref{quatrr} $[a^2, b\pi)_K$ is nonreal. As $\lvert S \rvert$ is odd, Hilbert Reciprocity (\Cref{Hilbertreciprocity}) tells us that there exists $w \in \Delta[a^2, b\pi)_K \setminus S$. By part (b) of \Cref{necessaryconditionsplitlocal} at least one of the following holds:
\begin{enumerate}[(i)]
\item\label{eq:caseChar2} $2w(a) = w(a^2) < 0$. In this case, $H^a([a^2, b\pi)_K) \subseteq \mathfrak{m}_{w}$.
\item\label{eq:caseCharneq2} $w(1+4a^2)$ is odd. In this case, $J^{1+4a^2}([a^2, b\pi)_K) \subseteq \mathfrak{m}_{w}$.
\item\label{eq:casebPiOdd} $w(b\pi)$ is odd, whereby $w(b)$ is odd (since $w \not\in S = \Odd(\pi)$) and thus $J^{b}([a^2, b\pi)_K) \subseteq \mathfrak{m}_{w}$.
\end{enumerate}
Note that case \eqref{eq:caseCharneq2} does not occur if $\charac(K) = 2$. If case \eqref{eq:caseChar2} occurs and $w(2) =~0$, then by part (a) of \Cref{necessaryconditionsplitlocal} also either \eqref{eq:caseCharneq2} or \eqref{eq:casebPiOdd} occurs.
We conclude that $J^{b}([a^2, b\pi)_K) \cap H^a([a^2, b\pi)_K) \subseteq \mathfrak{m}_{w}$ if $\charac(K) = 2$, and $J^{b}([a^2, b\pi)_K) \cap J^{1+4a^2}([a^2, b\pi)_K) \subseteq \mathfrak{m}_w$ if $w(2) = 0$. As this argument works for general $(a, b) \in \Phi_u^S$, this shows the right-to-left inclusion in each of the two cases.

To show the inclusion from left to right in both cases, it suffices to show that for any given $w \in \mathcal{V}_K \setminus S$ there exist $(a, b) \in \Phi_u^S$ such that $\Delta[a^2, b\pi)_K = S \cup \lbrace w \rbrace$. Indeed, having found such $(a, b)$ one has for all $v \in S$ that $v(1+4a^2) = v(a) = v(b) = 0$, from which it follows that $\mathfrak{m}_{w} \subseteq J^{1+4a^2}([a^2, b\pi)_K) \cap J^b([a^2, b\pi)_K) \cap H^a([a^2, b\pi)_K)$.

Given $w \in \mathcal{V}_K \setminus S$, by \Cref{pitchfork1} and Weak Approximation there exists $a \in K^\times$ such that $a \equiv u \bmod \prod_{v \in S}\mathfrak{m}_v$, $w(a) = 0$ and $X^2 - X - a^2$ is irreducible over $\mathcal{O}_{(w)}/\mathfrak{m}_{(w)}$. By \Cref{globalclassification} we can find $b \in K^\times$ such that $\Delta[a^2, b\pi)_K = S \cup \lbrace w \rbrace$. \Cref{pitchfork2} tells us that $v(b\pi) = v(b) + v(\pi)$ is odd for all $v \in S$, whereby $v(b)$ is even. Hence by Weak Approximation we may multiply $b$ by an appropriate square and assume without loss of generality that $b \in \bigcap_{v \in S} \mathcal{O}_v^\times$. Then $(a, b) \in \Phi_u^S$ and $\Delta[a^2, b\pi)_K = S \cup \lbrace w \rbrace$, whereby we are done.
\end{proof}

\begin{lem}\label{Dirichlet}
Let $K$ be a global field, $S \subseteq \mathcal{V}_K$ finite.
There exists $\pi \in K^\times$ such that $S \subseteq \Odd(\pi)$ and $\Odd(\pi)$ has odd cardinality.
\end{lem}
\begin{proof}
If needed, we enlarge $S$ to be a set of even cardinality. It follows from results of class field theory - more specifically a generalisation of Dirichlet's theorem on arithmetic progressions as formulated e.g. in \cite[A.10]{BMS} - that there exist infinitely many $v \in \mathcal{V}_K$ such that $S \cup \lbrace v \rbrace = \Odd(\pi)$ for some $\pi \in K^\times$.
\end{proof}

\begin{stel}\label{nicomainthmgev}
Let $K$ be a global field and $S$ a finite subset of $\mathcal{V}_K$. The set $\bigcup_{v \in \mathcal{V}_K \setminus S } \mathfrak{m}_v$ has an $\exists\La_K$-definition in $K$. Furthermore, $\mathcal{O}_S$ has a $\forall\La_K$-definition in $K$.
\end{stel}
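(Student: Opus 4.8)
The plan is to deduce the theorem by feeding the decomposition of \Cref{nicomainthm} into the uniform $\exists\La_K$-definitions from the rest of Section~6, and then converting an existential definition of a union of maximal ideals into a universal definition of the corresponding intersection of valuation rings via \Cref{EtoA}.

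First I would reduce to the case where $S$ has odd cardinality. Given an arbitrary finite $S \subseteq \mathcal{V}_K$, enlarge it to a finite set $S' \supseteq S$ of odd cardinality (possible because $\mathcal{V}_K$ is infinite); by \Cref{reduceToOdd} it then suffices to exhibit an $\exists\La_K$-definition of $\bigcup_{v \in \mathcal{V}_K \setminus S'}\mathfrak{m}_v$. So assume $|S|$ is odd. Next I would produce $\pi, u, c \in K^\times$ meeting the hypotheses of \Cref{nicomainthm}. By Weak Approximation one finds $\pi$ with $v(\pi) = 1$ for all $v \in S$, so $S \subseteq \Odd(\pi)$; for each $v \in S$, \Cref{pitchfork1} applied to the residue field $\mathcal{O}_v/\mathfrak{m}_v$ produces an element whose square makes the relevant polynomial irreducible there, and one more application of Weak Approximation yields a common lift $u \in K^\times$ with $v(u) = 0$ and $X^2 - X - u^2$ irreducible over $\mathcal{O}_v/\mathfrak{m}_v$ for every $v \in S$; and since $\Odd(\pi)$ is finite, Weak Approximation once more gives $c \in K^\times$ with $v(c) = 0$ for $v \in S$ and $v(c) = 1$ for $v \in \Odd(\pi)\setminus S$.

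With these choices, \Cref{nicomainthm} gives
\begin{displaymath}
\bigcup_{v \in \mathcal{V}_K \setminus S}\mathfrak{m}_v = \bigcup_{(a,b) \in \Phi_u^S} T_{a,b},
\end{displaymath}
where $T_{a,b}$ is the intersection of $J^{1+4a^2}([a^2,b\pi)_K)$, $J^b([a^2,b\pi)_K)$, $J^c([a^2,b\pi)_K)$ and $H^a([a^2,b\pi)_K)$. For $(a,b) \in \Phi_u^S$ the algebra $[a^2,b\pi)_K$ is nonreal by \Cref{quatrr}, and $1+4a^2$, $b\pi$ and $c$ are all nonzero (each is a unit at any $v \in S$), so \Cref{TIJquantifiers} applies and shows that each of these four sets has an $\exists\La_K$-definition of bounded rank, uniformly in the parameters $a,b$ (the fixed elements $\pi, c$ entering as constants of $\La_K$). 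Taking the conjunction, $T_{a,b}$ has an $\exists\La_K$-definition uniform in $(a,b)$; since $\Phi_u^S$ has an $\exists\La_K$-definition in $K^2$ by \Cref{nicomainthmlem}, the set $\lbrace x \in K \mid \exists a, b \in K : (a,b) \in \Phi_u^S \text{ and } x \in T_{a,b}\rbrace$, which is precisely $\bigcup_{v \in \mathcal{V}_K \setminus S}\mathfrak{m}_v$, has an $\exists\La_K$-definition. By the reduction above this yields the first assertion for every finite $S$. For the second assertion, note $\mathcal{O}_S = \bigcap_{v \in \mathcal{V}_K \setminus S}\mathcal{O}_v$ and that $\mathcal{V}_K \setminus S$ is non-empty (indeed infinite); \Cref{EtoA} then turns the $\exists\La_K$-definition of $\bigcup_{v \in \mathcal{V}_K\setminus S}\mathfrak{m}_v$ into a $\forall\La_K$-definition of $\mathcal{O}_S$.

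The substantive content lies already in \Cref{nicomainthm}, \Cref{nicomainthmlem} and \Cref{TIJquantifiers}, so the main care needed here is bookkeeping: checking that the hypotheses of \Cref{nicomainthm} can always be arranged for a suitable odd-cardinality enlargement of $S$ — the slightly delicate point being the simultaneous control of $v(u)$ and of the irreducibility of $X^2 - X - u^2$ modulo each $\mathfrak{m}_v$, which is exactly where \Cref{pitchfork1} and Weak Approximation combine — and observing that every constituent definition is uniform in $(a,b)$, so that quantifying over $(a,b)$ produces a single first-order formula rather than a schema. No quantifier bound is claimed in this statement, so the ranks need not be tracked; that refinement is carried out in Section~7.
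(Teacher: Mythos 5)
Your proposal is correct and follows essentially the same route as the paper: reduce to the odd-cardinality case via \Cref{reduceToOdd}, construct $\pi, u, c$ by Weak Approximation together with \Cref{pitchfork1}, invoke \Cref{nicomainthm} for the decomposition into the sets $T_{a,b}$, use \Cref{quatrr}, \Cref{TIJquantifiers} and \Cref{nicomainthmlem} to turn this into a single existential formula, and finish with \Cref{EtoA}. The extra verifications you include (the common lift of $u$, nonvanishing of $1+4a^2$, $b\pi$, $c$, $a$ on $\Phi_u^S$, and uniformity of the constituent formulas in the parameters) are all correct and merely make explicit what the paper's proof leaves implicit.
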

\begin{proof}
By \Cref{Dirichlet} there exists $\pi \in K^\times$ such that $S \subseteq S' = \Odd(\pi)$ and $S'$ has odd cardinality.
By Weak Approximation and \Cref{pitchfork1} we can find $u \in K$ such that for all $v \in S'$ one has $v(u) = 0$ and $X^2 - X - u^2$ irreducible over $\mathcal{O}_v/\mathfrak{m}_v$.

By \Cref{TIJquantifiers} and \Cref{nicomainthmlem}, the sets $\bigcup\lbrace J^b([a^2, b\pi)_K) \cap H^a([a^2, b\pi)_K) \mid (a, b) \in \Phi_u^{S'} \rbrace$ and $\bigcup\lbrace J^{1+4a^2}([a^2, b\pi)_K) \cap J^b([a^2, b\pi)_K) \mid (a, b) \in \Phi_u^{S'} \rbrace$ have $\exists\La_K$-definitions, hence by \Cref{nicomainthm} one has that $\bigcup_{v \in \mathcal{V}_K \setminus S' } \mathfrak{m}_v$ has an $\exists\La_K$-definition. By \Cref{reduceToOdd} we know that also $\bigcup_{v \in \mathcal{V}_K \setminus S} \mathfrak{m}_v$ has an $\exists\La_K$-definition. The second part now follows from \Cref{EtoA}.
\end{proof}
\begin{opm}
A variation of \Cref{nicomainthm} can be proven which does not require invoking additional results from class field theory (as in \Cref{Dirichlet}) to find an appropriate element $\pi$, but instead requires introducing another auxiliary parameter. This approach can be found in an earlier preprint of this article, available via \url{https://arxiv.org/abs/1812.04372v4} (Lemma 6.6).
\end{opm}
\section{Quantifier count and optimisation}\label{sectquantcount}
As explained in the proof of \Cref{nicomainthmgev}, the equality in \Cref{nicomainthm} gives rise to an $\exists\La_K$-definition of $\bigcup_{v \in \mathcal{V}_K \setminus S } \mathfrak{m}_v$, which then leads to a $\forall\La_K$-definition of $\mathcal{O}_S$ via \Cref{EtoA}. We can count the number of quantifiers in the definition obtained:
\begin{enumerate}[1.]
 \item By \Cref{TIJquantifiers} the sets $J^d(Q_{a^2,b})$ (for $d = 1+4a^2, b$) and $H^a(Q_{a^2,b})$ can be defined with $\exists\La$-formulas with 16 and 15 quantifiers, respectively. This implies that the intersections
\begin{displaymath}
J^b([a^2, b\pi)_K) \cap J^{1+4a^2}([a^2, b\pi)_K) \quad \text{and} \quad J^b([a^2, b\pi)_K) \cap H^a([a^2, b\pi)_K)
\end{displaymath}
can be defined with $16 + 16 = 32$ and $16 + 15 = 31$ quantifiers respectively.
\item By \Cref{nicomainthmlem} the set $\Phi_u^S$ needs $14$ quantifiers to define.
\item In total this yields a definition for $$\bigcup_{v \in \mathcal{V}_K \setminus S } \mathfrak{m}_v$$ with $2 + 14 + 32 = 48$ quantifiers if $\charac(K) \neq 2$, or $2 + 14 + 31 = 47$ quantifiers if $\charac(K) = 2$.
\end{enumerate}
This bound can be improved, using a result for reducing the number of quantifiers needed when taking a conjunction of existential formulas.
\begin{stel}\label{T:n+m-1}
Let $K$ be a field which is finitely generated over a perfect field.
For any $m_1, m_2, n \in \nat$ with $m_1, m_2 \geq 1$ and subsets $D_1, D_2 \subseteq K^n$ which are $\exists_{m_1}\La_K$-definable and $\exists_{m_2}\La_K$-definable respectively, the intersection $D_1 \cap D_2$ is $\exists_{m_1+m_2-1}\La_K$-definable.
\end{stel}
\begin{proof}
This is \cite[Theorem 1.4]{DDF}. In \cite[Theorem 1.2]{ZhangSunZinQ} a special case of this result is shown for $K = \qq$ for the same purposes as we use it, yielding a very explicit formula.
\end{proof}
\begin{prop}
The set $\bigcup_{v \in \mathcal{V}_K \setminus S } \mathfrak{m}_v$ has an $\exists_{37}\La_K$-definition in $K$. When $\charac(K) = 2$, it also has an $\exists_{36}\La_K$-definition in $K$.
\end{prop}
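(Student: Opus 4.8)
The plan is to re-run the construction underlying \Cref{nicomainthmgev} --- the decomposition of \Cref{nicomainthm} --- but with the auxiliary data $S$, $\pi$, $c$, $u$ chosen as frugally as possible, keeping careful track of quantifiers; the characteristic $2$ case will come out one quantifier cheaper because there $1+4a^2=1$. First I would reduce to a convenient $S$: by \Cref{reduceToOdd} we may replace $S$ by any finite superset without changing the conclusion and without worsening the bound (since $\max\{n,7\}=n$ once $n\geq 7$). So enlarge $S$ to contain every $v\in\mathcal{V}_K$ with $v(2)\neq 0$ --- finitely many in characteristic $\neq 2$, none in characteristic $2$ --- and then, using \Cref{Dirichlet}, pick $\pi\in K^\times$ with $S\subseteq\Odd(\pi)$ and $|\Odd(\pi)|$ odd, and replace $S$ by $\Odd(\pi)$. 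Now $S=\Odd(\pi)$ is non-empty of odd cardinality and every $v\notin S$ has $v(2)=0$. Since $\Odd(\pi)=S$, hypothesis (ii) of \Cref{nicomainthm} holds with $c=1$ (the only valuations to check lie in $\Odd(\pi)=S$, where one needs $v(c)=0$); choosing $u$ via \Cref{pitchfork1} and Weak Approximation so that (i) holds, \Cref{nicomainthm} yields
\begin{displaymath}
\bigcup_{v \in \mathcal{V}_K \setminus S}\mathfrak{m}_v = \bigcup_{(a,b) \in \Phi_u^S}\Big(J^{1+4a^2}([a^2,b\pi)_K) \cap J^b([a^2,b\pi)_K) \cap J^{1}([a^2,b\pi)_K)\cap H^a([a^2,b\pi)_K)\Big),
\end{displaymath}
and the factor $J^{1}([a^2,b\pi)_K)=\bigcap\{\mathfrak{m}_v\mid v\in\Delta([a^2,b\pi)_K)\cap\Odd(1)\}=\bigcap\emptyset=K$ disappears.

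In characteristic $2$ one also has $1+4a^2=1$, so $J^{1+4a^2}([a^2,b\pi)_K)=K$ as well, and the intersection collapses to $J^b([a^2,b\pi)_K)\cap H^a([a^2,b\pi)_K)$. By \Cref{TIJquantifiers} these two sets are $\exists_{16}$- and $\exists_{15}$-definable (with $a,b$ as parameters), hence jointly $\exists_{31}\La_K$-definable; by \Cref{nicomainthmlem} the set $\Phi_u^S$ is $\exists_{14}\La_K$-definable; adding the two quantifiers binding $a$ and $b$ gives an $\exists_{31+14+2}=\exists_{47}\La_K$-definition of $\bigcup_{v\in\mathcal{V}_K\setminus S}\mathfrak{m}_v$, and then of $\bigcup_{v\in\mathcal{V}_K\setminus S_0}\mathfrak{m}_v$ for the original $S_0$ by \Cref{reduceToOdd} again.

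In arbitrary characteristic the factor $J^{1+4a^2}([a^2,b\pi)_K)$ remains, and the step I expect to be the main obstacle is to describe it with only one existential quantifier in addition to those already spent on $J^b([a^2,b\pi)_K)$ and $H^a([a^2,b\pi)_K)$, rather than the $16$ that a naive appeal to \Cref{TIJquantifiers} would cost. The leverage should be that for $(a,b)\in\Phi_u^S$ the element $1+4a^2$ is a unit at every $v\in S$ --- because $X^2-X-a^2$ is irreducible over $\mathcal{O}_v/\mathfrak{m}_v$ there, forcing $v(1+4a^2)=0$ --- so that $J^{1+4a^2}([a^2,b\pi)_K)$ is non-trivial only along valuations outside $S$; combining this with the explicit description of $J^c$ coming from \Cref{jacoblem} should let its defining condition be folded into the formulas already present for the other two factors, at the cost of a single new quantifier. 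Granting such a description, $J^{1+4a^2}([a^2,b\pi)_K)\cap J^b([a^2,b\pi)_K)\cap H^a([a^2,b\pi)_K)$ is $\exists_{1+16+15}=\exists_{32}\La_K$-definable, and together with the $\exists_{14}\La_K$-definition of $\Phi_u^S$ and the two quantifiers for $(a,b)$ this produces an $\exists_{32+14+2}=\exists_{48}\La_K$-definition of $\bigcup_{v\in\mathcal{V}_K\setminus S}\mathfrak{m}_v$, hence of $\bigcup_{v\in\mathcal{V}_K\setminus S_0}\mathfrak{m}_v$ by \Cref{reduceToOdd}.

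Everything outside the last paragraph is bookkeeping built from results already in hand (\Cref{reduceToOdd}, \Cref{Dirichlet}, \Cref{nicomainthm}, \Cref{TIJquantifiers}, \Cref{nicomainthmlem}); the genuine work is the economical rewriting of $J^{1+4a^2}([a^2,b\pi)_K)$ in the general case, where one must check both inclusions in the displayed identity survive when this factor is replaced by its one-quantifier surrogate, using that along the constructed witnesses $(a,b)$ (where $\Delta([a^2,b\pi)_K)=S\cup\{w\}$ with $w(1+4a^2)=0$) the factor is already all of $K$, while along arbitrary $(a,b)\in\Phi_u^S$ it still captures exactly the valuations $v\in\Delta([a^2,b\pi)_K)$ with $v(1+4a^2)$ odd.
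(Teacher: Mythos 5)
The characteristic~$2$ half of your proposal matches the paper exactly: reduce via \Cref{reduceToOdd} and \Cref{Dirichlet} to $S=\Odd(\pi)$ of odd cardinality, take $c=1$ so that $J^c$ vanishes (because $\Odd(1)=\emptyset$), observe $1+4a^2=1$ kills $J^{1+4a^2}$, and count $16+15+14+2=47$. That part is fine.

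For characteristic~$\neq 2$ there is a genuine gap, and it is precisely the one you flag yourself. You propose to retain all three factors $J^{1+4a^2}\cap J^b\cap H^a$ and then \enquote{fold} the $J^{1+4a^2}$ factor into the others at the cost of a single extra quantifier, but you never exhibit such a one-quantifier surrogate, and it is not at all clear one exists --- the naive cost is $16$ quantifiers, and the fact that $1+4a^2$ is a unit along $S$ tells you something about \emph{where} the set $J^{1+4a^2}$ cuts, not how to express membership in it cheaply. So the count $48$ is not established by your argument.

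What makes this frustrating is that you already took the step that makes the intended argument work, and then did not use it. You enlarge $S$ to contain every $v$ with $v(2)\neq 0$. Once that is done, every $w\in\Delta([a^2,b\pi)_K)\setminus S$ has residue characteristic different from $2$, so part~(a) of \Cref{necessaryconditionsplitlocal} applies to $(1+4a^2,\,b\pi)_K\cong [a^2,b\pi)_K$ and yields directly that $w(1+4a^2)$ or $w(b\pi)$ is odd --- without ever invoking the alternative $w(a^2)<0$ from part~(b). Hence the factor $H^a$ (not $J^{1+4a^2}$) is the one that can be dropped, and the decomposition becomes
\begin{displaymath}
\bigcup_{v \in \mathcal{V}_K \setminus S } \mathfrak{m}_v = \bigcup_{(a, b) \in \Phi_u^S} \bigl(J^{1+4a^2}([a^2, b\pi)_K) \cap J^b([a^2, b\pi)_K)\bigr),
\end{displaymath}
costing $16+16=32$ quantifiers, plus $14$ for $\Phi_u^S$ and $2$ for $(a,b)$, for $48$ total. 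The right-to-left inclusion is then checked exactly as in \Cref{nicomainthm}, using that $w\notin S=\Odd(\pi)$ forces $w(b)$ odd whenever $w(b\pi)$ is odd. So the correct move is not to compress $J^{1+4a^2}$ but to discard $H^a$.
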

\begin{proof}
This follows by going through the quantifier count at the beginning of the section and applying \Cref{T:n+m-1} every time a conjunction of existential formulas is taken in the construction. This means that consecutively \Cref{TIJquantifierslem}, \Cref{AEpropgev2}, \Cref{TIJquantifiers} and \Cref{nicomainthmlem} need to be reproven to obtain the required number of quantifiers.
In particular, the sets $T([a^2, b)_K)$ are $\exists_6\La_K$-definable, and the sets $J^d([a^2, b)_K)$ and $H^a([a^2, b)_K)$ are $\exists_{13}\La_K$-definable and $\exists_{12}\La_K$-definable respectively.
A finite intersection of valuation rings $\bigcap_{v \in S} \mathcal{O}_v$ is $\exists_6\La_K$-definable, whereby $\Phi_u^S$ is $\exists_{11}\La_K$-definable.
The reader can verify these claims and that these eventually lead to the claimed number of quantifiers by counting the number of times conjunctions of existential formulas are taken in the construction of all these formulas.
\end{proof}
\section{Finitely generated subrings}\label{sectfingen}
\Cref{nicomainthmgev} shows us that rings of $S$-integers (with $S\subseteq \mathcal{V}_K$ finite) have a universal definition in their fraction field. What follows is a sketch on how to obtain from this a universal definition for any finitely generated ring in its global fraction field $K$.

For the rest of this section, let $R$ be a domain with a global field $K$ as its fraction field. Let $R'$ be the integral closure of $R$ in $K$.
\begin{lem}\label{fingenS}
Let $S = \lbrace v \in \mathcal{V}_K \mid R \not\subseteq \mathcal{O}_v \rbrace$. The following hold:
\begin{enumerate}
\item For any non-zero ideal $I$ of $R$, $R/I$ is a finite ring.
\item $R' = \mathcal{O}_S$.
\end{enumerate}
Furthermore, if $R$ is finitely generated, then the set $S$ is finite.
\end{lem}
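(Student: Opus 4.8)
The plan is to prove the three assertions separately, reducing each to a standard fact about global fields or about finitely generated rings.

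The finiteness of $S$ for finitely generated $R$ is quickest, so I would do it first. Writing $R$ as the $\zz$-subalgebra of $K$ generated by finitely many elements $x_1, \ldots, x_n$, which we may take to be nonzero, one has for $v \in \mathcal{V}_K$ that $R \subseteq \mathcal{O}_v$ precisely when $v(x_i) \geq 0$ for all $i$; hence $S = \bigcup_{i=1}^{n} \Neg(x_i)$. Each set $\Neg(x_i)$ is finite because $x_i \in K^\times$ and $K$ is a global field (Section 4), so $S$ is finite.

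For the equality $R' = \mathcal{O}_S$, the inclusion $R' \subseteq \mathcal{O}_S$ is immediate: if $v \notin S$ then $R \subseteq \mathcal{O}_v$, and $\mathcal{O}_v$ is integrally closed in $K$ (being a valuation ring), so $R' \subseteq \mathcal{O}_v$; intersecting over all $v \notin S$ gives the claim. For the converse I would take $x \in \mathcal{O}_S$ not integral over $R$ and derive a contradiction. Then $x^{-1}$ is a non-unit of $R[x^{-1}]$, hence lies in some maximal ideal, and the corresponding localisation is a local subring of $K$ in whose maximal ideal $x^{-1}$ lies; dominating it by a valuation ring $\mathcal{O}$ of $K$ yields $x \notin \mathcal{O}$, so $\mathcal{O} \neq K$. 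The decisive point is that a nontrivial valuation ring of a global field is of the form $\mathcal{O}_v$ for some $v \in \mathcal{V}_K$: the valuation is trivial or discrete on the prime field, over which $K$ is algebraic (number field case) or of transcendence degree one (function field case), and this forces it to be discrete of rank one. Thus $\mathcal{O} = \mathcal{O}_v$ with $R \subseteq \mathcal{O}_v$, i.e. $v \notin S$, contradicting $x \in \mathcal{O}_S \subseteq \mathcal{O}_v$. This reverse inclusion is the step I expect to require the most care, since it is the only place where one genuinely uses that $K$ is global rather than an arbitrary field; one could replace the domination argument by the description of the integral closure as the intersection of all valuation overrings of $R$, but the discreteness input is needed either way.

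For the finiteness of $R/I$ it suffices to show $R/aR$ is finite for a single nonzero $a \in I$, and we may assume $aR \neq R$. Since $\Frac(R) = K$ is a global field, $R$ is not a field (a finitely generated $\zz$-algebra that is a field is finite, whereas $K$ is infinite), and as a finitely generated $\zz$-algebra domain it has Krull dimension one by the dimension theory of such rings; hence every nonzero prime of $R$ is maximal, so $R/aR$ is a finitely generated $\zz$-algebra of Krull dimension zero and therefore Artinian. It is thus a finite product of Artinian local rings, each residue field of which is a field that is finitely generated as a ring, hence finite by the general Nullstellensatz; an Artinian local ring with finite residue field and finite length is finite, so $R/aR$, and with it every $R/I$, is finite. (Alternatively one could deduce this from $R' = \mathcal{O}_S$ being Dedekind with finite residue fields together with finiteness of the integral closure of $R$, but the argument above is more self-contained.)
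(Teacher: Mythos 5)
Your argument for the finiteness of $S$ coincides with the paper's. For $R' = \mathcal{O}_S$ you give a correct but genuinely different proof: the paper invokes the Krull--Akizuki theorem to conclude that $R'$ is a Dedekind domain and then uses that a Dedekind domain equals the intersection of the discrete valuation rings containing it, whereas you dominate $R[x^{-1}]$ (localised at a maximal ideal containing $x^{-1}$) by a valuation ring of $K$ and use that every nontrivial valuation ring of a global field is $\mathcal{O}_v$ for some $v \in \mathcal{V}_K$. Both are sound; your route is somewhat more elementary in that it does not presuppose that $R'$ is Dedekind, at the price of invoking the classification of valuation rings on $K$.

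For part (1), however, there is a genuine gap. The lemma is stated for an arbitrary domain $R$ with global fraction field $K$; the finite generation hypothesis is imposed only in the final assertion about $S$, not in parts (1) and (2). Your argument for the finiteness of $R/I$ relies in an essential way on $R$ being a finitely generated $\mathbb{Z}$-algebra: you use this to conclude that $R$ is not a field, to deduce $\dim R = 1$ from the dimension theory of such algebras, and to invoke the general Nullstellensatz for the finiteness of the residue fields of $R/aR$. None of these steps is available for a general domain $R$ with $\Frac(R) = K$. The paper avoids this by applying Krull--Akizuki to $R_0 \subseteq R \subseteq K$, where $R_0 = \mathbb{Z}$ or $\mathbb{F}_p[T]$: this yields that $R$ is noetherian of Krull dimension one and, crucially, that $R/I$ is a finitely generated $R_0/(I \cap R_0)$-module for any nonzero ideal $I$, hence finite, with no hypothesis on $R$ beyond $\Frac(R) = K$. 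As written, your argument proves part (1) only when $R$ is finitely generated --- which does suffice for the paper's eventual application in \Cref{fingenmainthm}, but does not prove the lemma as stated --- so you should either restrict the hypothesis or switch to the Krull--Akizuki argument for this part.
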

\begin{proof}
Let $p = \charac(K)$. Set $R_0 = \mathbb{Z}$ if $p = 0$; if $p > 0$, let $R_0$ be a fixed subring of $R$ isomorphic to $\mathbb{F}_p[T]$. Let $K_0 = \Frac(R_0)$. Note that $R_0$ is a principal ideal domain and $K/K_0$ is a finite extension; by the Krull-Akizuki Theorem \cite[Proposition VII.2.5.5]{Bou06} $R$ is a noetherian ring of Krull dimension $1$ and $R'$ is a Dedekind domain. Furthermore, for any non-zero ideal $I$ of $R$, $I_0 = I \cap R_0$ is non-zero and $R/I$ is a finitely generated $R_0/I_0$-module. But $R_0/I_0$ is a finite ring, whereby also $R/I$ is finite. This concludes the proof of the first part.

Clearly by definition $R \subseteq \mathcal{O}_S$ and hence also $R' \subseteq \mathcal{O}_S$, as the latter is integrally closed. Since $R'$ is a Dedekind domain, it is the intersection of the discrete valuation rings in which it is contained \cite[Proposition 2.1]{Cas67-1}; hence $R' = \mathcal{O}_S$ by construction.

Finally, assume that $R$ is generated as a ring by $b_1, \ldots, b_n \in K$. Then $S$ consists precisely of those $v \in \mathcal{V}_K$ for which $v(b_i) < 0$ for at least one $i \in \lbrace 1, \ldots, n \rbrace$. There are only finitely many such valuations.
\end{proof}
\begin{opm}
One can show that an integral domain with global fraction field $K$ is finitely generated if and only if it is contained in $\mathcal{O}_S$ for some finite set $S \subseteq \mathcal{V}_K$. 
\end{opm}
\begin{lem}\label{fingenlem}
Assume that $R$ is finitely generated. There exists $r \in R \setminus \lbrace 0 \rbrace$ such that $rR' \subseteq R$.
\end{lem}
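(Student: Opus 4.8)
The plan is to reduce everything to the assertion that $R'$ is a finitely generated $R$-module, and then to clear denominators. Concretely, suppose $x_1, \ldots, x_k$ generate $R'$ as an $R$-module, and write each $x_i = a_i/b_i$ with $a_i, b_i \in R$ and $b_i \neq 0$. Then $r := b_1\cdots b_k$ lies in $R \setminus \lbrace 0 \rbrace$ because $R$ is a domain, and $r x_i \in R$ for every $i$, so $rR' = \sum_{i=1}^k R\cdot(r x_i) \subseteq R$. Thus any such $r$ works, and the whole lemma comes down to proving module-finiteness of $R'$ over $R$.

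For that step I would not attempt to squeeze more out of Krull--Akizuki: as already used in \Cref{fingenS}, that theorem only gives that $R'$ is Noetherian and that $R'/\mathfrak{b}$ is a finite $R$-module for every nonzero ideal $\mathfrak{b}$ of $R'$, which is not enough --- there do exist one-dimensional Noetherian domains whose normalisation fails to be module-finite. Instead I would invoke the classical finiteness theorem for integral closures: since $R$ is, by hypothesis, a finitely generated $\zz$-algebra (and, when $\charac(K) = p > 0$, even a finitely generated $\ff_p$-algebra), it is a Nagata domain, so the integral closure $R'$ of $R$ in its own fraction field $K$ is a finitely generated $R$-module. This is exactly the place where the assumption that $R$ is finitely generated is indispensable; in characteristic $p$ one can also make it completely explicit via Noether normalisation, choosing a polynomial subring $\ff_p[t_1,\ldots,t_d] \subseteq R$ over which $R$ is finite and using that polynomial rings over a field are Japanese.

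The only genuine obstacle is therefore one of citation hygiene: one must be careful to flag that Krull--Akizuki does not suffice, and that the correct input is the module-finiteness of the normalisation of a ring of finite type over $\zz$ (or over a field). Granting that, the computation in the first paragraph finishes the proof with no further work.
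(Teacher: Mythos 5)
Your proof is correct and follows essentially the same route as the paper: reduce to module-finiteness of $R'$ over $R$, then clear denominators. The paper cites the same underlying finiteness theorem for normalisations of finitely generated $\zz$-algebras (via \cite[Corollary 7.7.4]{EGAIV}) that you invoke through the Nagata-ring formulation, so the two arguments differ only in how they reference this fact.
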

\begin{proof}
As $R$ is finitely generated as a $\zz$-algebra, its integral closure $R'$ is finitely generated as an $R$-module \cite[Corollary 7.7.4]{EGAIV}. Let $R' = Ra_1 + \ldots + Ra_n$ for $a_1, \ldots, a_n \in R'$. Since $R$ and $R'$ have the same fraction field, we have that for all $i$, there exists an $r_i \in R \setminus \lbrace 0 \rbrace$ such that $r_ia_i \in R$; setting $r = r_1\cdots r_n$ now yields $rR' \subseteq R$.
\end{proof}
\begin{stel}\label{fingenmainthm}
Assume that $R$ is finitely generated. Then $R$ has a $\forall\La_K$-definition in~$K$.
\end{stel}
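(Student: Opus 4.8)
The plan is to exploit the fact that $R$ is squeezed between a nonzero ideal and its integral closure $R'$, and that $R'$ is already known to have a universal definition.

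First I would set $S = \lbrace v \in \mathcal{V}_K \mid R \not\subseteq \mathcal{O}_v \rbrace$, which is finite by \Cref{fingenS}, so that $R' = \mathcal{O}_S$. By \Cref{fingenlem} there exists $r \in R \setminus \lbrace 0 \rbrace$ with $rR' \subseteq R$. I would observe that $rR' = r\mathcal{O}_S$ is a nonzero ideal of $R$: it is an additive subgroup, and for $s \in R \subseteq R'$ and $a' \in R'$ one has $s \cdot (ra') = r(sa') \in rR'$; it is nonzero since $r = r \cdot 1 \in rR'$. By the first part of \Cref{fingenS}, the quotient ring $R/rR'$ is finite, so, choosing coset representatives $x_1, \ldots, x_k \in R$, we get $R = \bigcup_{i=1}^k (x_i + r\mathcal{O}_S)$.

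Next I would transfer the universal definability of $\mathcal{O}_S$ (which is \Cref{nicomainthmgev}) to each coset. Since $r$ and the $x_i$ are elements of $K$, hence available as constants in $\La_K$, for each $i$ the set $x_i + r\mathcal{O}_S = \lbrace y \in K \mid r^{-1}(y - x_i) \in \mathcal{O}_S \rbrace$ is obtained from a $\forall\La_K$-formula defining $\mathcal{O}_S$ simply by substituting the term $r^{-1}(Y - x_i)$ for the free variable, which yields again a $\forall\La_K$-formula. Equivalently, $K \setminus (x_i + r\mathcal{O}_S)$ has an $\exists\La_K$-definition.

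Finally I would dualise and take complements: from $R = \bigcup_{i=1}^k (x_i + r\mathcal{O}_S)$ we get $K \setminus R = \bigcap_{i=1}^k \bigl(K \setminus (x_i + r\mathcal{O}_S)\bigr)$, a finite intersection of $\exists\La_K$-definable sets, which is again $\exists\La_K$-definable as noted in Section 2; hence $R$ has a $\forall\La_K$-definition in $K$. No serious obstacle remains at this stage, as all the substance is contained in \Cref{nicomainthmgev}, \Cref{fingenS} and \Cref{fingenlem}; the only point requiring a little care is that a finite union of universally definable sets stays universally definable precisely because one passes to the complement and works with existential formulas, for which finite disjunctions — and, after complementing, finite conjunctions — are harmless. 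As already flagged in the introduction, this argument gives no bound on the rank, since $k = \lvert R/rR' \rvert$ and hence the resulting number of quantifiers depend on $R$.
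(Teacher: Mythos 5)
Your proof is correct and follows essentially the same route as the paper: decompose $R$ as a finite union of cosets $x_i + rR'$ using \Cref{fingenlem} and the finiteness of $R/rR'$ from \Cref{fingenS}, then apply the universal definability of $R' = \mathcal{O}_S$ from \Cref{nicomainthmgev} and close by noting that finite unions of universally definable sets remain universally definable. The only difference is that you spell out more explicitly some details the paper leaves implicit (that $rR'$ is an ideal, how substitution transfers the formula to cosets, and the complementation argument), which is fine but not a different approach.
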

\begin{proof}
By \Cref{fingenlem} there exists an $r \in R \setminus \lbrace 0 \rbrace$ such that $rR' \subseteq R$. As $rR'$ is a non-zero ideal of $R$, $R/rR'$ is finite by the first part of \Cref{fingenS}. Thus, there exist $y_1, \ldots, y_n \in R$ such that $R = \bigcup_{i=1}^n (y_i + rR')$. We have by the second part of \Cref{fingenS} that $R'$ is a ring of $S$-integers; it follows from \Cref{nicomainthmgev} that it has a $\forall\La_K$-definition in $K$. Since a finite union of $\forall\La_K$-definable sets is again $\forall\La_K$-definable; we conclude that $R$ is $\forall\La_K$-definable in $K$.
\end{proof}
\begin{opm}
This method does not give us any uniform bound on the number of quantifiers needed to define a finitely generated subring of a global field in its fraction field. To see that this is the case, consider for a non-zero $n \in \mathbb{N}$ the subring $\mathbb{Z}[ni]$ of $\mathbb{Q}[i]$ where $i^2 = -1$. Then the integral closure of $\mathbb{Z}[ni]$ in $\mathbb{Q}[i]$ is $\mathbb{Z}[i]$. One verifies that
\begin{displaymath}
\forall r \in \mathbb{Z}[i] : (r\mathbb{Z}[i] \subseteq \mathbb{Z}[ni] \Rightarrow \lvert \mathbb{Z}[ni]/r\mathbb{Z}[i] \rvert \geq n).
\end{displaymath}
Indeed, whenever $r \in \mathbb{Z}[i]$ satisfies $a\mathbb{Z}[i] \subseteq \mathbb{Z}[ni]$, then we must have $r \in n\mathbb{Z}[i]$. And $\lvert \mathbb{Z}[ni]/n\mathbb{Z}[i] \rvert = n$. If $\mathbb{Z}[i]$ has a $\forall_m\La_{\qq[i]}$-definition in $\mathbb{Q}[i]$, the technique from \Cref{fingenmainthm} gives us a $\forall_{nm}\La_{\qq[i]}$-definition for $\mathbb{Z}[ni]$ in $\mathbb{Q}[i]$.
\end{opm}
\begin{ques}
Can we give a uniform bound on the number of quantifiers needed to universally define a finitely generated subring of a global field in its fraction field?
\end{ques}
\begin{ques}
Let $R$ be a finitely generated domain. Does $R$ have a universal definition in its fraction field?
\end{ques}


\Addresses

\end{document}